\pgfplotsset{compat=1.17}  
\let\mathcal\mathscr
\numberwithin{equation}{section}
\newtheorem{theorem}{Theorem}[section] 
\newtheorem{lemma}[theorem]{Lemma}
\theoremstyle{definition}
\newtheorem*{acknowledgements}{Acknowledgements}
\newtheorem{remark}[theorem]{Remark}
\newtheorem{definition}[theorem]{Definition}
\renewcommand{\emph}[1]{\textit{#1}}
\renewcommand{\phi}{\varphi}
\newcommand{\0}{\mathbf{0}}
\renewcommand{\leq}{\leqslant}
\renewcommand{\geq}{\geqslant}
\renewcommand{\c}{\mathbf{c}}
 \renewcommand{\b}{\mathbf{b}}
\renewcommand{\P}{\mathbb{P}}
\newcommand{\Q}{\mathbb{Q}}
\newcommand{\F}{\mathbb{F}}
\newcommand{\N}{\mathbb{N}}
\newcommand{\R}{\mathbb{R}}
\newcommand{\Z}{\mathbb{Z}}
\renewcommand{\b}{\mathbf}
\renewcommand{\c}{\mathcal}
\renewcommand{\epsilon}{\varepsilon}
\renewcommand{\leq}{\leqslant}
\renewcommand{\geq}{\geqslant}
\renewcommand{\#}{\sharp}
\title
[On the application of large deviation estimates to local solubility] 
{On the application of large deviation estimates to local solubility in families of varieties}
\author{Sun Woo Park} 
\address{Max Planck Institute for Mathematics\\ 
Vivatsgasse 7
\\ 53111 Bonn, Germany}
\email{s.park@mpim-bonn.mpg.de}
\author{Efthymios Sofos} 
\address{Universit\` a di Roma Tor Vergata\\ Dipartimento di Matematica\\00133, Roma, Italy}
\email{sofos@mat.uniroma2.it}
\subjclass[2020]{
14G05, 
60F10; 
11N36. 
}\date{}
\begin{document} 
\begin{abstract}We apply the G\"artner--Ellis 
theorem on large deviations
to prove a weak version of the Loughran--Smeets
conjecture 
for general fibrations.
\end{abstract}

\maketitle

\setcounter{tocdepth}{1}
\tableofcontents

\section{Introduction}   \label{s:intro}   
Serre \cite{MR1075658}  investigated the probability with 
which a random Diophantine equation 
has a
$\Q$-rational point in $1990$. 
This topic was later taken up by 
Poonen--Voloch \cite{MR2029869} in $2004$, who made a 
conjecture regarding the probability with which  
hypersurfaces of fixed degree and fixed number of 
variables satisfy the Hasse principle; 
this conjecture was recently proved by 
Browning--Le Boudec--Sawin \cite{MR4564262}.

In $2016$ Loughran--Smeets \cite{MR3568035} 
put this topic into a systematic geometric framework.
Their set-up
involves a proper, smooth projective variety $V$
defined over $\Q$, an integer $n\geq 1 $ and a 
dominant morphism 
$f:V\to \P_\Q^n$ with   geometrically 
integral generic fibre. As $x$ varies over $\P^n(\Q)$
the fibres $f^{-1}(x)$ are thought of as the random 
Diophantine equations. For example, random Fermat 
curves can be described by taking   
$V$ to be 
$\sum_{i=1}^3 x_i y_i^m=0 \subset 
\P^5
$
and the map  $f$ sending $(x,y)$ to $x$. 
Loughran--Smeets made a very general 
conjecture regarding the probability that a 
fibre is everywhere locally soluble;
global solubility is  out of reach 
at this level of generality as there are
several kinds of 
obstructions
depending 
on the nature of the fibres. Their conjecture (see
\cite[Conjecture 1.6]{MR3568035} for more details)
states that when at least
one fibre of $f$ is everywhere locally solvable and 
the fibre of $f$ over every
codimension $1$ point of $\P_\Q^n$
 has an irreducible component of multiplicity $1$, 
then  
\begin{equation}\label{conject}
\frac{c_f B^{n+1}} {(\log B)^{\Delta(f)}}
\leq 
\#\{x\in \P^n(\Q):H(x)\leq B, f^{-1}(x) \textrm{ everywhere locally soluble} \}
\leq 
\frac{c'_f B^{n+1}} {(\log B)^{\Delta(f)}}
\end{equation} holds for all large enough $B$, where 
$c_f,c'_f$ are  non-zero constants.
Here $H(x)$ is the naive Weil height on $\P^n(\Q)$ and 
$\Delta(f)$ is a Galois invariant that we recall in 
Definition \ref{def:delta}.
They proved the conjecture when $\Delta(f)=0$ in 
\cite[Theorem 1.3]{MR3568035} and verified the 
conjectured upper 
bound in all cases \cite[Theorem 1.5]{MR3568035}. 
There is a currently a heavy industry on verifying the conjecture when $V$ and $f$ 
are given by explicit equations,  
see, for instance, the references in~\cite{LRS}. To the best of authors' knowledge, however, there surprisingly seems to be a scarcity of research in seeking for an overarching strategy to understand the conjecture for general choices of $V$ and $f$.

We prove a version of \eqref{conject} in which the  local 
solubility assumption is weakened
but one that holds for all $f$ and $V$. 
The left-hand side of \eqref{conject} counts $x$ 
for which   
$f^{-1}(x)(\Q_v)\neq  \emptyset$    for all places 
 $ v \in \Omega_\Q$. We consider a weaker property 
by asking   that there are only few places 
 $v \in \Omega_\Q$  for which 
 $f^{-1}(x)(\Q_v)$ is empty. Namely, for  
 any  $\epsilon$    in $(0,\Delta(f))$ 
 we ask that \begin{equation}\label{basicproperty}
f^{-1}(x)(\Q_v)\neq \emptyset 
\textrm{ for all but }  
\epsilon\log \log H(x) 
\textrm{ places } v \in \Omega_\Q.
\end{equation}\begin{theorem}\label{thm:main1}
Let $V$ be a smooth 
projective variety over $\Q$ equipped with a dominant morphism 
$f:V\to \P^n$ with geometrically integral generic fibre and $\Delta(f)\neq 0$. Fix an arbitrary 
$0 < \epsilon < \Delta(f)$. 
Then as $B\to 
\infty$ we have $$ \#\{x\in \P^n(\Q):H(x)\leq B, 
\eqref{basicproperty}\}= 
(\log B)^{ 
-\epsilon( (\log \epsilon  )-1-\log\Delta(f))
 +o(1)} 
\cdot \frac{B^{n+1}} {(\log B)^{\Delta(f)}}.$$\end{theorem}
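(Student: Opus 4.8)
The proof strategy rests on reinterpreting the count of $x \in \P^n(\Q)$ satisfying \eqref{basicproperty} as a large-deviation event for a sum of independent random variables. The heuristic: as $x$ ranges over $\P^n(\Q)$ with $H(x) \le B$, the indicator random variables $Y_p(x) = \mathbf{1}[f^{-1}(x)(\Q_p) = \emptyset]$ behave, for distinct primes $p$, like independent Bernoulli random variables whose means $\delta_p := \PP[Y_p = 1]$ satisfy $\delta_p \sim \Delta(f)/p$ by the work of Loughran--Smeets (this is exactly the input that produces the exponent $\Delta(f)$ in \eqref{conject}, via $\prod_{p \le B}(1-\delta_p) \asymp (\log B)^{-\Delta(f)}$). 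The quantity $\sum_{p} Y_p(x)$ therefore has mean $\approx \Delta(f) \log\log H(x)$, and \eqref{basicproperty} asks that this sum be at most $\epsilon \log\log H(x)$ — a deviation \emph{below} the mean by a constant factor $\epsilon/\Delta(f) < 1$. The plan is to apply the G\"artner--Ellis theorem: the log-moment generating function of $\sum_p Y_p(x)$, normalised by $\log\log B$, converges to $\Lambda(t) = \Delta(f)(e^t - 1)$, whose Legendre--Fenchel transform is $\Lambda^*(a) = a \log(a/\Delta(f)) - a + \Delta(f)$. Evaluating at $a = \epsilon$ gives $\Lambda^*(\epsilon) = \epsilon(\log\epsilon - \log\Delta(f) - 1) + \Delta(f)$, and the large-deviation principle yields probability $(\log B)^{-\Lambda^*(\epsilon) + o(1)}$; multiplying by the total count $\asymp B^{n+1}$ and recombining the $\Delta(f)$ from $\Lambda^*(\epsilon)$ with the ambient normalisation produces precisely the stated asymptotic.

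**Key steps in order.**

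\emph{Step 1: Geometric input.} Invoke \cite{MR3568035} (Theorem 1.5 and its proof) to obtain, for each prime $p$ of good reduction, an exact count (or sharp asymptotic) for $\#\{x : H(x) \le B, f^{-1}(x)(\Q_p) = \emptyset\}$ in terms of a local density $\delta_p$ with $\delta_p = \Delta(f)/p + O(p^{-3/2})$ (Lang--Weil plus the structure of the non-split fibres over codimension-1 points). More importantly, one needs the \emph{joint} behaviour: for any finite set $S$ of primes, $\#\{x : H(x)\le B,\ f^{-1}(x)(\Q_p)=\emptyset \ \forall p \in S\} = B^{n+1}\prod_{p\in S}\delta_p + (\text{error})$, with an error that is uniform enough in $S$ to survive summation. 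This is the quantitative incarnation of ``independence'' and is essentially a lattice-point count over congruence conditions modulo $\prod_{p\in S}p$.

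\emph{Step 2: Moment generating function.} For $t \in \RR$, estimate $\sum_{x : H(x)\le B} \exp\bigl(t \sum_{p \le z} Y_p(x)\bigr)$ where $z = z(B) \to \infty$ is a truncation parameter (one expects $z = B$ or $z = (\log B)^A$ suffices; the tail $p > z$ contributes $o(1)$ to the normalised log-MGF since $\sum_{p > z, p \le B}\delta_p$ is a tail of a slowly-varying sum, but care is needed because for the \emph{lower} tail all primes matter — in fact since we want $Y_p = 0$ for most $p$, the relevant range is all $p$ up to $B$). Expanding the exponential over squarefree products of primes and using Step 1, one gets $B^{n+1}\prod_{p}\bigl(1 + (e^t-1)\delta_p\bigr)(1+o(1)) = B^{n+1}(\log B)^{\Delta(f)(e^t-1) + o(1)}$, using Mertens' theorem $\sum_{p\le B}\delta_p = \Delta(f)\log\log B + O(1)$. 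Thus the normalised cumulant generating function $\frac{1}{\log\log B}\log\bigl(B^{-(n+1)}\sum_x e^{t N(x)}\bigr) \to \Lambda(t) = \Delta(f)(e^t-1)$, which is finite and differentiable everywhere.

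\emph{Step 3: G\"artner--Ellis.} Since $\Lambda$ is finite on all of $\RR$ and essentially smooth, the G\"artner--Ellis theorem applies to the sequence of measures $\mu_B(A) = B^{-(n+1)}\#\{x : H(x)\le B,\ N(x)/\log\log B \in A\}$ (after checking this is, up to $1+o(1)$, a probability measure — the total mass is $\#\{x : H(x)\le B\}/B^{n+1} \to c$, a positive constant, which only affects the $o(1)$). One obtains the large-deviation upper and lower bounds with rate function $\Lambda^*$. Applying these to the closed/open half-lines $\{a \le \epsilon\}$ and $\{a < \epsilon\}$ respectively, and using that $\Lambda^*$ is continuous and strictly decreasing on $[0,\Delta(f)]$ with $\Lambda^*(\Delta(f)) = 0$, gives $\mu_B(\{a \le \epsilon\}) = (\log B)^{-\Lambda^*(\epsilon) + o(1)}$ for $0 < \epsilon < \Delta(f)$.

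\emph{Step 4: Translating back.} The event ``$N(x) \le \epsilon \log\log B$'' is not literally \eqref{basicproperty}, which uses $\log\log H(x)$ rather than $\log\log B$; but on the dyadic range $B^{1/2} < H(x) \le B$ one has $\log\log H(x) = \log\log B + O(1)$, and a dyadic decomposition absorbs the discrepancy into $o(1)$ in the exponent (the contribution of $H(x) \le B^{1/2}$ is negligible by induction or by a trivial bound). One also must handle the places $v = \infty$ and $v \mid (\text{bad reduction})$: these are $O(1)$ many and do not affect $\epsilon \log\log H(x)$ asymptotically, so they can be discarded. Combining with Step 3 and $\#\{x : H(x)\le B\} \asymp B^{n+1}$ yields
\[
\#\{x\in\P^n(\Q) : H(x)\le B,\ \eqref{basicproperty}\} = B^{n+1}\,(\log B)^{-\Lambda^*(\epsilon)+o(1)},
\]
and substituting $\Lambda^*(\epsilon) = \Delta(f) + \epsilon(\log\epsilon - \log\Delta(f) - 1)$ and pulling out $(\log B)^{-\Delta(f)}$ gives the theorem.

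**Main obstacle.**

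The hard part is \emph{not} the probabilistic machinery (G\"artner--Ellis is a black box once the normalised log-MGF is identified) but \emph{establishing the joint equidistribution in Step 1 with uniformity over $S$}, i.e.\ showing that the local non-solubility events decorrelate across primes strongly enough that the product formula for the MGF holds with a genuinely negligible error even though the number of primes involved grows with $B$. The Loughran--Smeets upper-bound technology (sieve / large sieve over $\P^n$) gives one-sided control; for the MGF we need the truncated sum $\sum_{p \le z}Y_p(x)$ to genuinely concentrate, which for the \emph{lower} deviation $\{N(x) \le \epsilon\log\log B\}$ is the delicate regime — we are asking for $x$ whose fibre is $\Q_p$-soluble for an unusually large set of primes, and we must rule out that conspiracies among the $\delta_p$ (e.g.\ from a common splitting field of the bad fibres) distort the rate function. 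I expect this is handled by choosing $z$ a slowly growing function of $B$ (so that the number of squarefree products is $(\log B)^{o(1)}$ and Lang--Weil errors are controlled), proving the LDP for the truncated sum $\sum_{p\le z}Y_p$, and then separately showing the untruncated tail $\sum_{z < p \le B}Y_p$ is $o(\log\log B)$ for \emph{all but a negligible set} of $x$ via a first-moment (Markov) bound — since $\EE[\sum_{z<p\le B}Y_p] = \Delta(f)(\log\log B - \log\log z)$, one must take $\log\log z \sim \log\log B$, e.g.\ $z = B^{1/\log\log B}$, to make this tail genuinely smaller order. Balancing these two constraints on $z$ is the technical crux.
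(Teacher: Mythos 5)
Your overall architecture coincides with the paper's: model the indicators by independent Bernoulli variables, show the normalised log-moment generating function converges to $\Delta(f)(\mathrm e^t-1)$, apply G\"artner--Ellis, and transfer from $\log\log B$ to $\log\log H(x)$ by a dyadic argument. You also correctly identify the Legendre transform and the fact that the uniformity of the joint equidistribution over sets of primes is the crux. However, two steps as you propose them would fail. First, your plan to control the discarded primes by ``a first-moment (Markov) bound'' giving $o(\log\log B)$ for ``all but a negligible set'' is quantitatively insufficient: with $z=B^{1/\log\log B}$ the expected tail is $\asymp\log\log\log B$, so Markov gives an exceptional set of relative measure only $o(1)$ (roughly $\log\log\log B/\log\log B$), whereas the event being counted has probability $(\log B)^{-\Lambda^*(\epsilon)+o(1)}$, which is far smaller. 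The exceptional set must be $O((\log B)^{-N})$ for arbitrarily large $N$; the paper achieves this with an exponential Chebyshev inequality, bounding $\mathbb E_B[\mathrm e^{y(\omega_f-\omega_f^\flat)}]$ via the generalised Nair--Tenenbaum theorem (Lemmas \ref{lem:nairtenenb} and \ref{lem:8}), not a first moment.

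Second, you flag but do not resolve the tension between needing $\log\log z\sim\log\log B$ and needing the equidistribution errors to be summable; truncating only from above does not resolve it. The error term in Lemma \ref{lem:distribution} is $O(\delta^{\omega(Q)}B^{n+1}/(Q\min\{p\mid Q\}))$, and if $Q$ may contain bounded primes then summing over the relevant squarefree $Q$ produces an error of the same order as the main term (the ``level of distribution $0$'' phenomenon the paper emphasises). The paper's resolution is a \emph{two-sided} truncation $t_0<p\leq t_1$ with $t_0=(\log B)^{M}$ and $t_1=B^{1/(\log\log B)^M}$: the lower cut makes $\min\{p\mid Q\}>t_0$ so the error sum gains a factor $(\log B)^{-M}$ (Lemmas \ref{lem:rromegrr} and \ref{lem:9a01ststep}), the upper cut keeps $Q\leq B^{1/6}$ for moments of order $r\leq(\log B)/(6\log t_1)$, and the large-$r$ tail of the exponential series is handled separately via Norton's inequality and Nair--Tenenbaum (Lemmas \ref{lem:tail}, \ref{lem:largerrr}, \ref{JMFC=SDG}). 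The small primes $p\leq t_0$ that your one-sided truncation retains must instead be discarded and absorbed into the same exponential-moment error analysis as the large ones. Without these two ingredients the proof does not close.
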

The right-hand side 
is compatible with  the asymptotic given  in 
Conjecture \eqref{conject} owing to  
 $\lim_{\epsilon \to 0^+}
 \epsilon( (\log \epsilon  )-1-\log\Delta(f))=0$.
 We note that the constants $c_f,c'_f$ are not visible by 
Theorem \ref{thm:main1} owing to the 
term $(\log B)^{o(1)}$.
 \subsection{Connection to probability}
The generic fibre is   geometrically integral, thus, 
by the Lang--Weil estimates and Hensel’s lemma
we see that for $x$ outside a proper Zariski closed set,
the function $$\omega_f(x) :=\#\{v \in \Omega_\Q: 
f^{-1}(x) \textrm{ has no }\Q_v\textrm{-point}\}, \ \ \ x\in \P^n(\Q)$$ 
is finite. Loughran--Sofos \cite{MR4269677}    showed that 
for $100\%$ of $x\in \P^n(\Q)$ 
the function $\omega_f(x)$ concentrates around 
 $ \Delta(f) \log \log H(x)$. More precisely, 
 they proved that 
 $$ \omega_f(x)= \Delta(f) \log \log H(x) 
 + \c Z_x \sqrt{\Delta(f) \log \log H(x)},$$ where $\c Z_x$
follows a standard normal distribution. Note that Conjecture
\eqref{conject} asks for the probability of the value 
$\omega_f(x)=0$ that   lies in the far
left of the Gaussian distribution. For comparison, 
Theorem \ref{thm:main1} corresponds to the probability of the 
red-shaded 
tail between $0$ and  $\epsilon 
\log \log H(x)$
in the following figure:  
\begin{figure}[h]
    \centering
    \includegraphics[width=\linewidth]{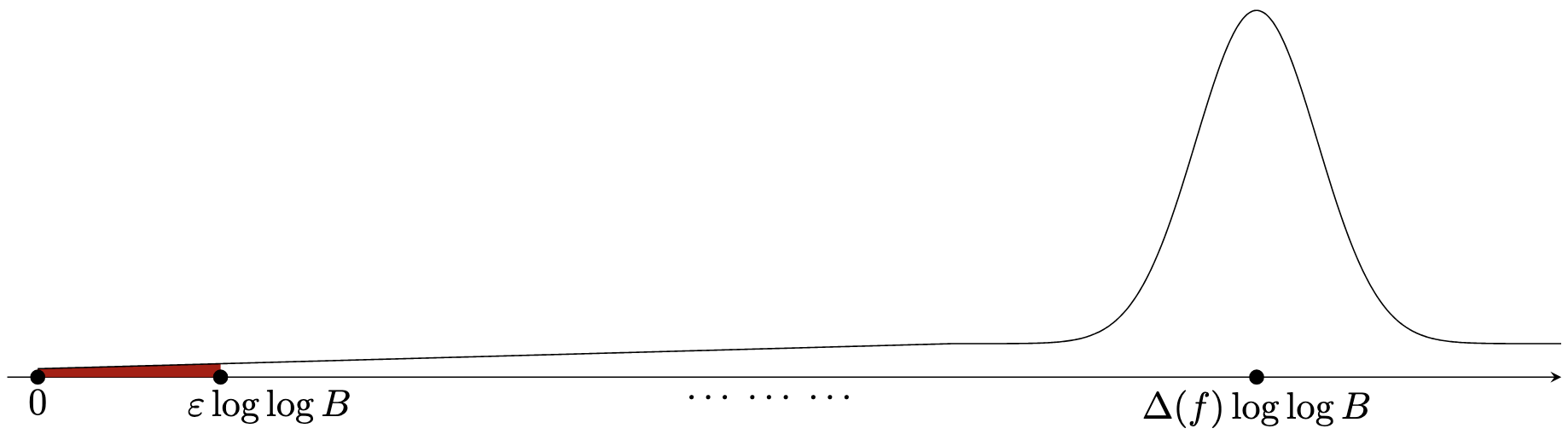}
    \caption*{Probability density function of $\omega_f(x)$.}
\end{figure}
In probability theory there are 
asymptotics for the frequency of rare events, a 
central result being the G\"artner-Ellis large deviation 
theorem \cite[Theorem 2.3.6]{MR2571413}.
The main idea behind Theorem \ref{thm:main1}
is to apply the G\"artner-Ellis result to a suitable 
probabilistic model of   $\omega_f$. \begin{remark}In order to apply the G\"artner--Ellis work (that we recall in
Lemma \ref{lem:gartellis}),
one needs asymptotics for the exponential moments 
$$\sum_{\substack{x\in \P^n(\Q), H(x)\leq B \\
f^{-1}(x) \textrm{ smooth}}} C^{\omega_f(x)}
$$ for any fixed $C\in \mathbb C$. 
This is not straightforward because  the only 
available equidistribution 
results work only when 
$p$-adic solubility conditions are imposed for primes $p$
of moderate size, see Lemma \ref{lem:distribution}. 
Indeed, the presence of $\min\{p\mid Q\}$ in the error 
term of Lemma \ref{lem:distribution} implies that adding 
the error terms for all square-free $Q$ going up to $B^\delta$
for any fixed $\delta>0$ gives an error term 
whose order of magnitude is at least as large as the main term.
In sieve theory language this means that the level of distribution is $0$.
We thus have to introduce a variant $\omega_f^\flat$ of $\omega_f$ 
that only takes into account primes of moderate size. Furthermore, 
it will be crucial to use the recent 
generalised Nair--Tenenbaum results \cite{3tors} 
regarding averages of non-negative arithmetic 
functions over arbitrary 
integer sequences.\end{remark}

\begin{remark}
There are previous results on large deviation principles for arithmetic functions,
for example, 
in the work of 
Radziwi\l\l \cite{Radzi},  
Mehrdad--Zhu \cite{MR3471275} and
Keliher--Park \cite{KP}.
In \cite{MR3471275} one can directly bound the
moments of $\omega$ by computing the analogous moments 
for the probabilistic model due to the inequality 
$$\mathbb E[Z_p]=\frac{1}{B} \left[\frac{B}{p}\right] 
\leq \frac{1}{p} \leq \mathbb E[Y_p].$$ This, or even a weaker version of this in which 
$\mathbb E[Z_p]$ is approximated by 
$\mathbb E[Y_p]$ up to an admissible error term, 
is impossible for $\omega_f$ because 
the level of distribution is $0$. 
Our proof can be easily modified to establish 
analogous large deviation results for sequences with a positive 
level of distribution. For instance, our method applies to estimating the tail bounds for the distribution of number of 
prime divisors of $F(\mathbf{x})$, where $F$ is an arbitrary 
integer polynomial in any number of variables. In this
case 
the constant $\Delta(f)$ in the asymptotic of 
Theorem \ref{thm:main1}
would be replaced by the 
number of irreducible components of $F$.
\end{remark}

\begin{remark}The Gaussian distribution results in the work of 
Loughran--Sofos \cite{MR4269677} do not directly imply 
Theorem \ref{thm:main1}. This is because knowing the 
distribution of the normalised random variable 
$(X_n-\mathbb E[X_n])/\sqrt{\textrm{Var}[X_n]}$ 
does not determine the distribution of the 
mean-scaled random variable $X_n/\mathbb E[X_n]$.
\end{remark}

 \begin{acknowledgements}Part of this work was 
 conducted while ES visited the Max Planck 
 Institute for Mathematics in Bonn in 2025, 
 where SWP holds a postdoctoral position. We 
 gratefully acknowledge the hospitality and 
 financial support provided by the Institute.
 \end{acknowledgements}
 \section{Prerequisites}
 We state all the geometric, analytic, and probabilistic prerequisites to obtain Theorem \ref{thm:main1}.
\subsection{Geometric prerequisites} 
\begin{definition}\label{def:delta} Let $f:V\to X$ be a dominant 
proper morphism of smooth irreducible varieties over a field $k$
of characteristic $0$. For each point $x\in X$ with residue 
field $\kappa(x)$, the absolute Galois group 
$ \mathrm{Gal}(\overline{\kappa(x)}/\kappa(x))$ of the residue 
field acts on the irreducible components of 
$$f^{-1}(x)_{\overline{\kappa(x)}} := f^{-1}(x) 
\times_{\kappa(x)} \overline{\kappa(x)} $$ with multiplicity $1$.
Choose some finite group $\Gamma_x$ through which this action 
factors. We define $$ \delta_x(f):=
\frac{\#\{\gamma\in \Gamma_x: \gamma \textrm{ fixes irreducible
components of } f^{-1}(x) _{\overline{\kappa(x)} }
\textrm{ with multiplicity } 1\}}{\#\Gamma_x}$$ and $$\Delta(f):= 
\sum_{D \in X^{(1)}} (1-\delta_D(f) ),$$
where $X^{(1)}$ denotes the set of codimension $1$ points of $X$.
\end{definition} For a prime $p$ let $$\sigma_p:= 
\frac{\#\{x\in \P^n(\F_p): f^{-1}(x) \textrm{ is non-split}\}}
{\#\P^n(\F_p)}.$$The term ``non-split", introduced by Skorobogatov \cite[Definition 0.1]{skoro}, refers to a scheme over a perfect field that does not contain a geometrically integral open subscheme. 
Let $$c_n=\frac{2^n}{\zeta(n+1)},$$ where $\zeta$ is the Riemann zeta function. The next result is 
\cite[Proposition 3.6]{MR4594271}.
\begin{lemma}\label{lem:distribution} Keep the setting of Theorem
\ref{thm:main1}. There exist constants 
$\delta=\delta(V,f)>1$ and $A=A(V,f)>0$ with the following property.
For any square-free 
$Q\in \mathbb  N$ that is coprime to all primes $p\leq A$ 
and for all $B\geq Q^6$ we have 
$$ \# \left\{ x \in \mathbb{P}^n(\mathbb{Q}) :
\begin{array}{l}
H(x) \leq B, f^{-1}(x)\ \mathrm{smooth} \\
f^{-1}(x)(\mathbb{Q}_p) = \emptyset \ \forall\ p \mid Q 
\end{array} \right\}=c_n B^{n+1}\prod_{p\mid Q} 
\sigma_p+O\Big(\frac{\delta^{\omega(Q)} B^{n+1} }
{Q \min\{p\mid Q\}}\Big) ,$$ where the implied constant is 
independent of $B$ and $Q$. \end{lemma}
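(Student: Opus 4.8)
The statement is an equidistribution estimate: to leading order, the rational points of height at most $B$ whose fibre is smooth and has no $\Q_p$-point for every prime $p\mid Q$ are distributed with density $\prod_{p\mid Q}\sigma_p$. The plan is to convert the $p$-adic solubility conditions into congruence conditions by means of the geometry of the non-split locus, and then to count lattice points in a box subject to those congruences using the geometry of numbers.

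I would first fix $A=A(V,f)$ together with the geometric input. Spreading $f\colon V\to\P^n$ out to a proper model over $\Z[1/A_0]$ and enlarging $A_0$, one can take $A\geq A_0$ large enough that for every prime $p>A$ the model has good reduction away from the reduction mod $p$ of the discriminant hypersurface, and so that the Lang--Weil bounds hold uniformly over the fibres outside that hypersurface. For such $p$ and $x\in\P^n(\Q)$ of good reduction at $p$, Hensel's lemma gives $f^{-1}(x)(\Q_p)\neq\emptyset\iff f^{-1}(x)(\F_p)\neq\emptyset$; since a smooth proper $\F_p$-scheme has pairwise disjoint geometric components, such a point exists if and only if some component is already defined over $\F_p$, i.e.\ if and only if the fibre is split, while Lang--Weil guarantees that a split fibre does acquire an $\F_p$-point once $p>A$. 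Thus for $p>A$ the density in $\P^n(\F_p)$ of $\{\bar x:f^{-1}(x)\text{ smooth},\ f^{-1}(x)(\Q_p)=\emptyset\}$ differs from $\sigma_p$ by $O(1/p)$, the discrepancy confined to the bad-reduction locus, whose density is $O(1/p)$ by a uniform bound on the degree of the discriminant; in particular $\sigma_p=O(1/p)$.

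Next, for $Q$ squarefree and coprime to the primes $\leq A$, I would lift $\P^n(\Q)$ to primitive integer vectors $\a\in\Z^{n+1}_{\mathrm{prim}}$ with $\|\a\|_\infty\leq B$ (each projective point counted twice). By the Chinese Remainder Theorem the condition ``$f^{-1}(x)$ smooth and $f^{-1}(x)(\Q_p)=\emptyset$ for all $p\mid Q$'' becomes $\a\bmod Q\in\widetilde\Omega_Q$ for a set $\widetilde\Omega_Q\subseteq(\Z/Q)^{n+1}$ --- the affine cone over the chosen locus --- consisting of vectors that are units modulo every $p\mid Q$, of cardinality $|\widetilde\Omega_Q|=Q^{n+1}\prod_{p\mid Q}\sigma_p\,(1+O(1/Q))$, and this up to two corrections: the vectors with some $p\mid Q$ dividing the discriminant of $f^{-1}(x)$ (density $\ll\sum_{p\mid Q}\tfrac1p\prod_{p'\mid Q/p}\sigma_{p'}$), and the per-prime $O(1/p)$ slippage just described. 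I would then Möbius-invert over the content $d$ of $\a$: since $\widetilde\Omega_Q$ consists of vectors invertible modulo each $p\mid Q$, only $d$ coprime to $Q$ survive, and for each such $d$ the box-with-congruence estimate $\#\{\mathbf m\in\Z^{n+1}:\|\mathbf m\|_\infty\leq B/d,\ \mathbf m\bmod Q\in d^{-1}\widetilde\Omega_Q\}=|\widetilde\Omega_Q|(2B/d)^{n+1}Q^{-(n+1)}+O\big(|\widetilde\Omega_Q|(B/(dQ)+1)^{n}\big)$ holds. Summing over $d\leq B$ collects the main terms into $|\widetilde\Omega_Q|\,\zeta(n+1)^{-1}Q^{-(n+1)}\prod_{p\mid Q}(1-p^{-n-1})^{-1}(2B)^{n+1}$, which --- after inserting $|\widetilde\Omega_Q|/Q^{n+1}$, using $c_n=2^n/\zeta(n+1)$, and absorbing the Euler factors --- is $c_nB^{n+1}\prod_{p\mid Q}\sigma_p$ up to an admissible error.

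The one substantive point, and the hard part, will be to bound, uniformly in $Q$, the total error term --- the $d>B/Q$ tail, the geometry-of-numbers error $\ll|\widetilde\Omega_Q|\big((B/Q)^{n}+B\big)\ll Q^{-4}B^{n+1}\prod_{p\mid Q}\sigma_p$ (valid since $B\geq Q^6$), the corrections from divisibility of the discriminant and from the $O(1/p)$ slippage, and the $O(1/Q)$ defect of $|\widetilde\Omega_Q|$ --- by $O\big(\delta^{\omega(Q)}B^{n+1}/(Q\min\{p\mid Q\})\big)$ for a suitable $\delta=\delta(V,f)>1$. The estimate $\prod_{p\mid Q}\sigma_p\ll(\mathrm{const})^{\omega(Q)}/Q$ supplies the $1/Q$; the products $\prod_{p\mid Q}(1+\mathrm{const}/p)$ appearing when the corrections are expanded across the primes dividing $Q$ supply the factor $\delta^{\omega(Q)}$; and a refinement that isolates the dominant correction term --- the one attached to the smallest prime factor of $Q$ --- supplies the extra $1/\min\{p\mid Q\}$. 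Pinning down this sharp shape uniformly in $Q$ is where the effort goes; the split/non-split dichotomy is essentially off the shelf, and the geometry of numbers is routine in the regime $B\geq Q^6$.
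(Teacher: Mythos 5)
The paper does not prove this lemma itself; it imports it verbatim from \cite[Proposition 3.6]{MR4594271}, so your proposal is an attempt to reprove an external input. Your overall architecture (fix $A$ by spreading out, lift to primitive integer vectors, M\"obius inversion over the content, geometry of numbers in the regime $B\geq Q^6$, multiplicativity of local densities over $p\mid Q$) does match the shape of the known proof, and the lattice-point bookkeeping you describe is routine. The gap is at the decisive step: the translation of ``$f^{-1}(x)(\Q_p)=\emptyset$'' into a congruence condition. This condition is \emph{not} determined by $\bar x\in\P^n(\F_p)$: the only $\bar x$ for which it can hold lie in the non-split (hence bad-reduction) locus, exactly where the Hensel/Lang--Weil equivalence you invoke breaks down. (For a conic bundle whose fibre over $t=0$ is a pair of conjugate lines, $p$-adic insolubility depends on the parity of $v_p(t)$, hence on $t$ to arbitrary $p$-adic precision.) So the set $\widetilde\Omega_Q\subseteq(\Z/Q)^{n+1}$ whose preimage is exactly your counting set does not exist; one must sandwich the condition between the mod-$p$ condition ``$\bar x$ non-split for all $p\mid Q$'' and that condition minus exceptional sets defined modulo $p^2$, and count these separately.

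This is not merely a presentational issue, because it destroys your error term. You bound the per-prime discrepancy between ``no $\Q_p$-point'' and ``non-split mod $p$'' by $O(1/p)$; but $\sigma_p$ is itself $O(1/p)$ (Lemma \ref{lem:zelenkaoboes}), so a discrepancy of $O(1/p)$ at even one prime $p\mid Q$, multiplied by $\prod_{p'\mid Q/p}\sigma_{p'}$, contributes $\asymp\delta^{\omega(Q)}B^{n+1}/Q$ --- the same order as the main term. Your own intermediate bound $\sum_{p\mid Q}\tfrac1p\prod_{p'\mid Q/p}\sigma_{p'}$ is already of that size. To reach $O(\delta^{\omega(Q)}B^{n+1}/(Q\min\{p\mid Q\}))$ one needs the discrepancy at \emph{every} prime to be $O(1/p^2)=O(\sigma_p/p)$; this is the substantive geometric input (an analysis modulo $p^2$ of points specialising into the non-split divisors, using Lang--Weil on the components of the non-split fibres themselves), and it is precisely what your sketch omits. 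The closing suggestion that a ``refinement isolating the smallest prime factor'' recovers the factor $1/\min\{p\mid Q\}$ cannot be executed starting from the $O(1/p)$ estimate: no rearrangement of error terms each comparable to the main term yields a saving.
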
The next result is
\cite[Lemma 3.2]{MR4269677}.\begin{lemma}\label{lem:polyn}
There exists a homogeneous square-free form $F\in 
\Z[x_0,\ldots, x_n]$  and $A=A(V,f)>0$ such that for all $p>A$
and $x\in \P^n(\Q)$ with $f^{-1}(x)(\Q_p)=\emptyset$ 
we have $p\mid F(x)$. \end{lemma}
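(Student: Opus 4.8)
The plan is to reduce Lemma~\ref{lem:polyn} to the following geometric fact, the standard input to Ekedahl-sieve-type estimates (cf.\ \cite{MR3568035}): a prime $p>A$ can have $f^{-1}(x)(\Q_p)=\emptyset$ only if the reduction of $x$ modulo $p$ lands in a \emph{fixed} proper Zariski-closed subset of $\P^n$ — namely the complement of the locus over which $f$ is smooth with geometrically integral fibres — and every such subset lies on a hypersurface, which supplies the square-free form $F$. Concretely, by generic smoothness (we are in characteristic $0$) together with the openness of the geometric-integrality locus for proper flat morphisms, there is a dense open $U\subseteq\P^n_\Q$ with $f^{-1}(U)\to U$ smooth and proper and with geometrically integral fibres; put $Z:=\P^n_\Q\setminus U$. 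Choose a nonzero homogeneous $F_0\in\Q[x_0,\dots,x_n]$ vanishing on $Z$ and let $F\in\Z[x_0,\dots,x_n]$ be a primitive square-free homogeneous form with the same radical as $F_0$, so that $Z$ lies on the hypersurface $\{F=0\}$; this is the form in the statement, and since $F$ is primitive its reduction modulo $p$ is a nonzero form for every prime $p$.

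Next I would spread out. As $V$ is projective, fix a finite set of primes $S$, a projective $\Z_S$-model $\cV\to\P^n_{\Z_S}$ of $f$ (here $\Z_S$ is $\Z$ with the primes of $S$ inverted), and an open $\mathcal U\subseteq\P^n_{\Z_S}$ with generic fibre $U$, so that, after enlarging $S$, both $\cV|_{\mathcal U}\to\mathcal U$ is smooth and proper with geometrically integral fibres, and $\P^n_{\Z_S}\setminus\mathcal U$ lies on $\{F=0\}$. Each of these is a routine limit argument: for the last two, the property already holds over the generic point of $\P^n$, so its failure locus is closed and disjoint from the generic fibre, hence supported on finitely many $\F_p$. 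Since $\cV\to\P^n$ is a bounded projective family, a uniform form of the Lang--Weil estimates — with a constant $C$ depending only on the complexity of the family — yields $\#\cV_{\bar u}(\F_p)\ge p^{d}-Cp^{d-1/2}>0$ for every prime $p>C^2$ and every $\bar u\in\mathcal U(\F_p)$, where $d=\dim V-n$. Fix $A$ larger than $C^2$ and than every prime in $S$.

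Finally, the key step. Let $p>A$ and $x\in\P^n(\Q)$ with $f^{-1}(x)(\Q_p)=\emptyset$; I claim $p\mid F(x)$, where $F(x)$ is the value of $F$ at a primitive integral representative of $x$. By the valuative criterion of properness, $x$ extends to $\widetilde x\in\P^n(\Z_p)$ with reduction $\bar x\in\P^n(\F_p)$. Suppose $\bar x\in\mathcal U(\F_p)$. As $\mathcal U$ is open and $\Spec\Z_p$ is local, $\widetilde x$ factors through $\mathcal U(\Z_p)$, so $\cV_{\widetilde x}\to\Spec\Z_p$ is smooth and proper with geometrically integral special fibre $\cV_{\bar x}$. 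By the choice of $A$, that special fibre has an $\F_p$-point, which lifts by Hensel's lemma (smoothness of $\cV_{\widetilde x}$ over $\Z_p$) to a $\Z_p$-point of $\cV_{\widetilde x}$; restricting to the generic fibre, and using that $\cV$ is a model of $V$ over $\Z_S$ with $p\notin S$, this gives a $\Q_p$-point of $f^{-1}(x)$, a contradiction. Hence $\bar x\notin\mathcal U(\F_p)$, so $\bar x$ lies on $\P^n_{\Z_S}\setminus\mathcal U$ and therefore on $\{F=0\}$, which is exactly the assertion $p\mid F(x)$. This proves the lemma with this $F$ and $A$.

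The only ingredient that is not purely formal is the uniformity of the Lang--Weil constant across the family $\cV|_{\mathcal U}\to\mathcal U$; this is the step I would treat with most care, although it is by now standard and already underlies the results of \cite{MR3568035}. I note, finally, that one could instead take $F$ to be the product of the defining forms of the non-split codimension-one fibres of $f$ — the choice that governs the size of $\omega_f$ and matches the way Lemma~\ref{lem:polyn} is used later — but for the statement as given any hypersurface through $Z$ does the job.
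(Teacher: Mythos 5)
The paper gives no proof of this lemma --- it is quoted directly from \cite[Lemma 3.2]{MR4269677} --- and your argument is correct and is essentially the standard one underlying that result: spread out to a model over $\Z_S$, isolate the dense open locus where $f$ is smooth and proper with geometrically integral fibres, apply uniform Lang--Weil plus Hensel lifting to force any $p$-adically insoluble fibre to reduce into the complement, and take $F$ to be a square-free form cutting out a hypersurface containing that complement. No gaps; the only points needing care (removing vertical components of $\P^n_{\Z_S}\setminus\mathcal U$ by enlarging $S$, and the uniformity of the Lang--Weil constant over the family) are exactly the ones you flag and handle.
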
 Let us now recall 
\cite[Lemma 3.3]{MR4269677}.\begin{lemma}
\label{lem:zelenkaoboes}Let $A,\delta$ be as in Lemma
\ref{lem:distribution}, let $F$ be as in Lemma 
\ref{lem:polyn} and assume $p>A$.
Then $$\sigma_p \leq \frac{\deg(F)}{p}.$$\end{lemma}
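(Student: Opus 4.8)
The plan is to bound the numerator of $\sigma_p$, namely $\#\{x\in\P^n(\F_p): f^{-1}(x)\ \text{non-split}\}$, by the number of $\F_p$-points on the reduction modulo $p$ of the hypersurface cut out by $F$, and then to apply an elementary point count. The geometric heart of the argument is the inclusion
\[
\{x\in\P^n(\F_p): f^{-1}(x)\ \text{non-split}\}\ \subseteq\ \{x\in\P^n(\F_p): F(x)=0\ \text{in}\ \F_p\},
\]
valid for $p>A$ (with $A$ taken large enough that Lemma~\ref{lem:polyn} applies and that a fixed model of $f$ has good reduction at $p$); here $F(x)$ is evaluated at a primitive integral representative of $x$, and we use that the square-free form $F$ of Lemma~\ref{lem:polyn} may be taken primitive, so that its reduction modulo $p$ is a nonzero form. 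Granting the inclusion together with the standard estimate $\#\{x\in\P^n(\F_p): F(x)=0\}\le\deg(F)\cdot\#\P^{n-1}(\F_p)$ — an elementary induction on $\deg F$ from the Schwartz--Zippel bound, in line with Serre's bound on $\F_p$-points of hypersurfaces — one obtains
\[
\sigma_p\ \le\ \deg(F)\,\frac{\#\P^{n-1}(\F_p)}{\#\P^n(\F_p)}\ <\ \frac{\deg(F)}{p},
\]
the last inequality because $p\cdot\#\P^{n-1}(\F_p)=\#\P^n(\F_p)-1$; in particular one even gets a strict inequality.

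To prove the inclusion I would fix $x_0\in\P^n(\F_p)$ with $f^{-1}(x_0)$ non-split, the fibre being taken in a fixed flat proper $\Z[1/N]$-model of $f$ with $p\nmid N$. For $p$ large the non-split locus over $\F_p$ coincides with the set of reductions of those rational points $x$ with $f^{-1}(x)(\Q_p)=\emptyset$: indeed, by the Lang--Weil estimates a split fibre over $\F_p$ carries a smooth $\F_p$-point, which by Hensel's lemma lifts to a $\Q_p$-point of $f^{-1}(x)$ for every lift $x$ of $x_0$, while conversely for $p>A$ a non-split fibre forces $f^{-1}(x)(\Q_p)=\emptyset$ for such lifts. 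Choosing an integral lift $x\in\P^n(\Q)$ of $x_0$, Lemma~\ref{lem:polyn} then gives $p\mid F(x)$, hence $F(x_0)=0$ in $\F_p$. Alternatively, one can unwind the construction behind Lemma~\ref{lem:polyn}: $F$ may be taken to be a primitive square-free form whose vanishing locus contains the Zariski closure in $\P^n$ of the non-split locus of $f$, a proper closed subset since the generic fibre is geometrically integral; reducing modulo $p$ for $p>A$ gives the inclusion directly.

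The one genuinely delicate ingredient is this correspondence between non-split fibres over $\F_p$ and $p$-adic insolubility for $p$ large — precisely the reason Lemma~\ref{lem:polyn} is relevant to $\sigma_p$, and the only place where $p>A$ is used essentially. It rests on the Lang--Weil estimates together with Hensel's lemma applied to a model of $f$, carried out as in \cite{MR3568035,MR4269677}. Once it is granted, the remaining steps — the Schwartz--Zippel-type bound for $\F_p$-points on a hypersurface and the identity $p\cdot\#\P^{n-1}(\F_p)=\#\P^n(\F_p)-1$ — are routine.
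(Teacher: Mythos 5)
The paper offers no proof of this lemma: it is quoted verbatim from \cite[Lemma 3.3]{MR4269677}, so your argument can only be judged against that source. The elementary part of your proposal is fine: the bound $\#\{x\in\P^n(\F_p):F(x)=0\}\leq \deg(F)\cdot\#\P^{n-1}(\F_p)$ (for $p$ large) and the identity $p\cdot\#\P^{n-1}(\F_p)=\#\P^n(\F_p)-1$ do give $\sigma_p<\deg(F)/p$ once the inclusion $\{x\in\P^n(\F_p):f^{-1}(x)\text{ non-split}\}\subseteq\{F\equiv 0\bmod p\}$ is established. The problem is your primary justification of that inclusion. You assert that for $p>A$ a non-split fibre over $\F_p$ forces $f^{-1}(x)(\Q_p)=\emptyset$ for lifts $x$. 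That implication is false. Lang--Weil plus Hensel give only the other direction: a split fibre has a smooth $\F_p$-point, hence lifts, so \emph{insoluble implies non-split}. A non-split fibre can perfectly well have $\Q_p$-points --- e.g.\ a degenerate fibre of a conic bundle consisting of two conjugate lines meeting in a rational point is non-split but has a $\Q_p$-point. Indeed, the whole point of the quantity $\sigma_p$ versus the actual insolubility density is that only \emph{some} non-split fibres are insoluble. Consequently, the black-box statement of Lemma \ref{lem:polyn} gives $\{f^{-1}(x)(\Q_p)=\emptyset\}\subseteq Z(F\bmod p)$, and the insoluble locus sits \emph{inside} the non-split locus; your chain of inclusions runs the wrong way, and the needed containment of the non-split locus in $Z(F\bmod p)$ cannot be deduced formally from Lemma \ref{lem:polyn} as stated.

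Your fallback --- unwinding the construction of $F$ so that $Z(F)$ contains the closure of the non-split locus --- is the correct route and is essentially what Loughran--Sofos do, but as written it is a one-sentence gesture, and two points still need care. First, one needs a spreading-out argument showing that for $p>A$ the points of $\P^n(\F_p)$ with non-split fibre lie on the reduction modulo $p$ of the characteristic-zero non-split locus. Second, that locus is the union of finitely many divisors together with a closed set of codimension at least $2$; if $F$ only cuts out the divisorial part, the leftover piece contributes $O(p^{n-2})$ points, i.e.\ $O(p^{-2})$ to $\sigma_p$, which is \emph{not} absorbed by the slack in your strict inequality (that slack is only of size $\deg(F)/(p\cdot\#\P^n(\F_p))\asymp p^{-n-1}$). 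So one must arrange for $F$ to vanish on a hypersurface containing that codimension-$\geq 2$ piece as well, which is part of the construction in \cite{MR4269677} that you would need to import explicitly.
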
Finally, 
we shall require \cite[Proposition 3.6]{MR4269677}. \begin{lemma}
\label{lem:mertens}There exists a constant $\beta_f$ such that 
for all $B\geq 1$ we have $$\sum_{p\leq B} \sigma_p=
\Delta(f) (\log \log B)+ \beta_f+O( (\log B)^{-1})).$$\end{lemma}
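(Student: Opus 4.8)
\medskip
\noindent\textbf{Proof sketch.}
The plan is to reduce the lemma to a pointwise asymptotic for $\sigma_p$ and then invoke the classical Mertens estimate; observe that the bound $\sigma_p\le\deg(F)/p$ of Lemma~\ref{lem:zelenkaoboes} alone is too crude, since it gives no summable control on $\sigma_p-\Delta(f)/p$. I would first prove that there is a real $\eta>0$ such that, after enlarging $A$ if necessary,
$$\sigma_p=\frac{\Delta(f)}{p}+O_{V,f}\bigl(p^{-1-\eta}\bigr)\qquad(p>A),$$
the argument below yielding $\eta=\tfrac12$. Granting this, one writes
$$\sum_{p\le B}\sigma_p=\sum_{p\le A}\sigma_p+\Delta(f)\sum_{A<p\le B}\frac1p+\sum_{A<p\le B}O\bigl(p^{-1-\eta}\bigr);$$
the first term is an absolute constant, Mertens gives $\sum_{p\le B}p^{-1}=\log\log B+M+O((\log B)^{-1})$, and the convergent series $\sum_{p>A}p^{-1-\eta}$ has tail $O(B^{-\eta})$ beyond $B$, which is dominated by $(\log B)^{-1}$. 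Collecting all absolute constants into a single $\beta_f$ yields the asserted formula for large $B$, the range of bounded $B$ being trivial since only finitely many primes occur.

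The pointwise estimate is the geometric core. Non-splitness of the fibre being a constructible condition on the base, the set of points of $\P^n_\Q$ with non-split fibre is contained in a proper closed subset $Z\subsetneq\P^n_\Q$; since the generic fibre is geometrically integral, a codimension-one point $D$ satisfies $\delta_D(f)=1$ exactly when the fibre over it is split, so only finitely many codimension-one $D$ — say $D_1,\dots,D_r$ — have $\delta_D(f)<1$, and $\Delta(f)=\sum_{i=1}^r\bigl(1-\delta_{D_i}(f)\bigr)$ is a finite sum. I would spread out $V$, $f$, the stratification $Z\supseteq D_1\cup\dots\cup D_r$, and, for each $i$, the finite \'etale Galois ``splitting cover'' $\widetilde U_i\to U_i$ with group $\Gamma_{D_i}$ over a dense open $U_i\subseteq D_i$ along which the fibre type of $f$ is constant, over some $\Spec\Z[1/N]$; enlarging $A$ to at least $N$ makes every such object reduce faithfully modulo each prime $p>A$. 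For such $p$, any $x\in\P^n(\F_p)$ with non-split fibre lies on $Z_{\F_p}$, so the number of them equals $\sum_{i=1}^r\#\{x\in U_i(\F_p):f^{-1}(x)\text{ non-split}\}$ up to an error $O_{V,f}(p^{n-2})$ absorbing the codimension-$\ge2$ strata, the complements $D_i\setminus U_i$, and the intersections $D_i\cap D_j$. On $U_i$, by construction of the splitting cover, $f^{-1}(x)$ is split precisely when the Frobenius conjugacy class of $x$ in $\Gamma_{D_i}$ (read off from $\widetilde U_i\to U_i$) fixes a multiplicity-one component, so the non-split $x\in U_i(\F_p)$ are those whose Frobenius lies in the conjugation-stable subset of $\Gamma_{D_i}$ of relative size $1-\delta_{D_i}(f)$; the effective Chebotarev/Lang--Weil estimate for $\F_p$-points of a variety carrying a finite \'etale cover then gives $\bigl(1-\delta_{D_i}(f)\bigr)p^{n-1}+O_{V,f}(p^{n-3/2})$. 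Summing over $i$ and dividing by $\#\P^n(\F_p)=p^n+O(p^{n-1})$ produces $\sigma_p=\Delta(f)/p+O_{V,f}(p^{-3/2})$. A geometrically reducible $D_i$ is handled at the corresponding closed point, whose residue field is a number field — exactly the setting in which $\delta_{D_i}(f)$ is defined — and I would keep this bookkeeping implicit.

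The main obstacle is precisely this geometric input, not the subsequent analytic manipulation. One needs, uniformly for all $p>A$: (i) good reduction mod $p$ of the non-split locus and of the splitting covers, so that the mod-$p$ non-split count really is governed by the $U_i$ and their covers — this rests on constructibility of non-splitness, on spreading out each stratum, and on the constancy of the fibre type along it; and (ii) an effective Chebotarev theorem over $\F_p$ with error $O(p^{n-3/2})$ and implied constant depending only on $(V,f)$, not on $p$. Once (i)--(ii) are in hand the rest is the routine Mertens bookkeeping of the first paragraph, with the governing $O((\log B)^{-1})$ error coming entirely from the prime side.
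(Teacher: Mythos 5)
Your proposal is correct, but note that the paper does not prove this lemma at all: it quotes it verbatim from Loughran--Sofos \cite[Proposition 3.6]{MR4269677}, whose proof proceeds exactly as you outline --- the pointwise estimate $\sigma_p=\Delta(f)/p+O_{V,f}(p^{-3/2})$ obtained by spreading out and an effective Chebotarev/Lang--Weil count of $\F_p$-points with prescribed Frobenius in the splitting covers over the finitely many relevant divisors, followed by Mertens' theorem and absorption of the convergent tails into the constant $\beta_f$. Your sketch is a faithful reconstruction of that cited argument, including a correct identification of the genuine geometric input (good reduction of the strata and covers, and a uniform-in-$p$ Chebotarev error term) as the only non-routine step.
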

\subsection{Analytic prerequisites}
The following gives an upper bound of the right
order of magnitude for the average of a multiplicative 
function over the values of an integer polynomial
in an arbitrary number of variables.
\begin{lemma}\label{lem:nairtenenb}Assume that 
$f:\N\to [0,\infty)$ is a 
multiplicative function such that for all $m\in \N$ we have 
$f(m)\leq A_1 \tau(m)^{B_1}$, where $A_1,B_1$ are fixed constants.
Assume that $G\in \Z[x_1,\ldots, x_n]$ is an irreducible polynomial.
Then $$\sum_{\substack{ \b x \in \Z^{n}\cap[-B,B] \\ G(\b x) 
\neq 0}}f(|G(\b x )| ) \ll \frac{B^{n}}{\log B} \prod_{p\leq B^{n}}
\left(1+f(p)\frac{\#\{\b x \in \F_p^{n}: G(\b x)=0\}}{p^{n}} \right),$$ where the implied constant depends at most on 
$A_1, B_1, n$ and $G$.\end{lemma}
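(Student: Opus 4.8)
The plan is to deduce the bound from the generalised Nair--Tenenbaum estimate of \cite{3tors}, applied to the finite multiset of positive integers
$$\cA=\big(\,|G(\b x)|\,\big)_{\b x\in\ZZ^n\cap[-B,B]^n,\ G(\b x)\neq0},$$
which has $N:=\#\cA=(2B)^n+O_G(B^{n-1})$ elements, each bounded by $C_G B^{\deg G}$. That theorem turns a level-of-distribution hypothesis for an integer sequence into a product upper bound of exactly the shape in the statement, so the two tasks are to supply such a hypothesis for $\cA$ and then to rewrite the conclusion.

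\emph{Step 1: the distribution of $\cA$ in residue classes.} I would set $\rho(d)=\#\{\b x\bmod d:G(\b x)\equiv0\bmod d\}$, which is multiplicative in $d$ by the Chinese remainder theorem, and record $\rho(p)=\#\{\b x\in\F_p^n:G(\b x)=0\}\leq(\deg G)\,p^{n-1}$, since $\{G=0\}$ is a hypersurface of degree at most $\deg G$ in $\AA^n$; hence $\rho(d)\leq(\deg G)^{\omega(d)}d^{n-1}$ for squarefree $d$. Partitioning the box $[-B,B]^n$ into residue classes modulo $d$ gives, for squarefree $d$,
$$\#\{\b x\in\ZZ^n\cap[-B,B]^n:d\mid G(\b x)\}=\rho(d)\frac{(2B)^n}{d^n}+O\big(\rho(d)(1+B/d)^{n-1}\big),$$
so the remainder $r_d$ relative to the natural density $\rho(d)/d^n$ obeys $|r_d|\ll(\deg G)^{\omega(d)}B^{n-1}$ for $d\leq B$; hence, for any fixed $\theta<1$, $\sum_{d\leq B^{\theta}}\mu^2(d)3^{\omega(d)}|r_d|\ll_G B^{n-1+\theta}(\log B)^{O(1)}$, which beats $N\asymp B^n$ by an arbitrary power of $\log B$. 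This is the required positive level of distribution. If the precise formulation of \cite{3tors} also calls for bounds at higher prime powers, I would use $\rho(p^k)\ll_G p^{k(n-1)}$ for all $k\geq1$, which follows from Hensel lifting and $\rho(p)\leq(\deg G)p^{n-1}$ for $p$ outside a fixed finite set and is handled prime by prime otherwise; here, and only here, I use that $G$ is squarefree, which holds because it is irreducible over $\Q$.

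\emph{Step 2: invoking \cite{3tors} and rewriting.} Feeding Step 1 into \cite{3tors} yields
$$\sum_{a\in\cA}f(a)\ \ll\ \frac{N}{\log B}\,\prod_{p\leq B^n}\Big(1+\sum_{k\geq1}\frac{f(p^k)\,\rho(p^k)}{p^{kn}}\Big),$$
with implied constant depending only on $A_1,B_1,n,G$. The truncation parameter actually supplied by \cite{3tors} may be a different power of $B$, but this is harmless: if it exceeds $B^n$, the extra primes (all in $[B^n,B^{n\deg G}]$) contribute only a bounded factor, since on primes $f(p)\leq A_1 2^{B_1}=O_{A_1,B_1}(1)$ and $\rho(p)/p^n\ll_G 1/p$; if it is smaller, the displayed bound is only weaker. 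It then remains to replace $N$ by $B^n$ (a constant factor) and to collapse the inner sum: since $f(p^k)\leq A_1(k+1)^{B_1}$ and $\rho(p^k)/p^{kn}\ll_G p^{-k}$, the terms with $k\geq2$ contribute $O_{A_1,B_1,G}(p^{-2})$, so each local factor equals $\big(1+O(p^{-2})\big)\big(1+f(p)\rho(p)/p^n\big)$ with $\prod_p(1+O(p^{-2}))=O(1)$. As $\rho(p)=\#\{\b x\in\F_p^n:G(\b x)=0\}$, this is the asserted bound. (Should \cite{3tors} instead output the equivalent form $\tfrac{N}{\log B}\exp\!\big(\sum_{p\leq B^n}f(p)\rho(p)p^{-n}\big)$, one passes to the product via $\exp(\sum_p t_p)\asymp\prod_p(1+t_p)$, valid since $\sum_p t_p^2=O(1)$.)

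The step I expect to be the main obstacle is Step 1, specifically the uniformity bookkeeping: the level-of-distribution estimate, and any prime-power bounds, must hold with constants depending only on $G$ and $n$, and must be matched to the exact hypotheses of \cite{3tors}. A minor additional point is that if \cite{3tors} is stated for one-parameter sequences $(a_m)_{m\leq x}$ rather than for lattice boxes, one first re-indexes $\ZZ^n\cap[-B,B]^n$ as such a sequence, after which the hypothesis to be verified is exactly that of Step 1.
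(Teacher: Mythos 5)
Your overall route is the same as the paper's: invoke the generalised Nair--Tenenbaum theorem of \cite{3tors} (the paper cites \cite[Theorem 1.15]{3tors}, which is already stated for polynomial values over a box, so your Step 1 level-of-distribution verification is not needed there, though it is harmless), then convert the resulting sum $\sum_{a\leq B^n}f(a)\rho(a)/a^n$ into an Euler product and strip the prime-power terms. The paper also records one point you pass over silently: as stated, \cite[Theorem 1.15]{3tors} has an implied constant depending on $f$ itself, and one must inspect its proof to see that this can be replaced by dependence on $A_1,B_1$ only.

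There is, however, a genuine error in your Step 2: the claim that $\rho(p^k)\ll_G p^{k(n-1)}$, equivalently $\rho(p^k)/p^{kn}\ll_G p^{-k}$, uniformly in $k$ ``by Hensel lifting'' is false for a general irreducible $G$. Hensel lifting controls only the nonsingular locus; points of $\{G=0\}$ that are singular mod $p$ lift much more generously. For instance, for $G(x,y)=y^2-x^3$ and $k=6$, every pair $(x,y)=(p^2u,p^3v)$ solves $G\equiv 0\bmod{p^6}$, giving $\rho(p^6)\geq p^{7}$, so $\rho(p^6)/p^{12}\geq p^{-5}$, not $\ll p^{-6}$. This matters because you need the $k\geq 2$ terms $\sum_{k\geq2}f(p^k)\rho(p^k)/p^{kn}$ to be $O(p^{-2})$, and since $f(p^k)$ may grow like $k^{B_1}$ you need genuine decay in $k$ as well as the $p^{-2}$ saving; a uniform $O(p^{-2})$ bound alone would leave a divergent sum over $k$. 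The correct repair is exactly what the paper does with \cite[Lemma 2.8]{3tors}: use the bound $\rho(p^e)/p^{en}\ll p^{-e/\deg G}$ to kill the tail $e>E:=4\deg G$, and the separate bound $\rho(p^e)/p^{en}\leq\rho(p^2)/p^{2n}\ll p^{-2}$ (valid for $e\geq 2$ since $G$ is squarefree, so its singular locus has codimension $\geq 2$) for the finitely many remaining exponents $2\leq e\leq E$. With that substitution your argument closes.
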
\begin{proof} By 
\cite[Theorem 1.15]{3tors} we obtain the bound 
$$\ll  \frac{B^{n}}{\log B} \sum_{1\leq a 
\leq B^{n}} f(a) \frac{\#\{\b x \in (\Z/a\Z)^{n}:G(\b x )=0 \}}
{a^{n}},$$ where the implied constant depends at most on 
$A_1, B_1, n$ and $G$. Note that, as stated, 
\cite[Theorem 1.15]{3tors} gives a dependence of the upper bound on $f$. However, an inspection of the proof shows that 
this dependence on $f$ can be replaced by 
dependence on $A_1$ and $B_1$ instead. By multiplicativity the sum 
over $a$ is at most $ \prod_{p\leq B^{n}}(1+t_p)$, where $$t_p:=\sum_{e\geq 1} 
f(p^e) \frac{\#\{\b x \in (\Z/p^e\Z)^{n}:G(\b x )=0 \}}
{p^{en}}.$$ Let $E:=4\deg(G)$. 
By \cite[Lemma 2.8]{3tors} we see that the contribution of $e>E$ 
is$$\ll \sum_{e>E}  \frac{\tau(p^e)^{B_1} }{p^{ e /\deg(G)}}
\ll  \sum_{e>E}  \frac{e^{B_1} }{p^{ e /\deg(G)}}
\ll  \frac{E^{B_1} }{2^{ E /(2\deg(G))} }
\frac{1}{p^{ E /(2\deg(G))} }
\ll \frac{1}{p^2}.$$ By \cite[Lemma 2.8]{3tors} 
the  contribution of $2\leq e\leq E$ 
is $$\ll  \frac{\#\{\b x \in (\Z/p^2\Z)^{n}:G(\b x )=0 \}}{p^{2n}}
\sum_{2\leq e\leq E} f(p^e)
\ll \frac{E^{1+B_1}}{p^2} \ll \frac{1}{p^2} 
.$$ Hence, $$\prod_{p\leq B^{n}}(1+t_p)
\ll  \prod_{p\leq B^{n}}\left(1+f(p)\frac{\#\{\b x \in \F_p^{n}:
G(\b x)=0\}}{p^{n}}+O\left(\frac{1}{p^2}\right) \right), $$ which is 
\[\ll  \prod_{p\leq B^{n}}\left(1+f(p)\frac{\#\{\b x \in \F_p^{n}:
G(\b x)=0\}}{p^{n}} \right)
.\qedhere\]\end{proof}
\begin{lemma}\label{lem:corol} 
Let $S$ be any subset of $[2,\infty)$ 
and fix any $C>0$. Assume that $G\in \Z[x_1,\ldots, x_n]$ is an
irreducible polynomial. Then 
$$\sum_{\substack{ \b x \in \Z^{n}
\cap[-B,B] \\ G(\b x) \neq 0}} C^{\#\{p\in S:p\mid G(\b x )\}}
\ll  \frac{B^n}{\log B} \prod_{\substack{ p \in S \\ p\leq B^n }}
\left(1+\frac{\#\{\b x \in \F_p^{n}: G(\b x)=0\}}{p^{n}} \right)^C ,$$ 
where the implied constant depends at most on 
$C, n$ and $G$.\end{lemma}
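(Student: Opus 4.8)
The plan is to deduce Lemma~\ref{lem:corol} from Lemma~\ref{lem:nairtenenb}. First I would write the summand $C^{\#\{p\in S:p\mid G(\b x)\}}$ as $g(|G(\b x)|)$, where $g\colon\N\to[0,\infty)$ is the multiplicative function determined by $g(p^e)=C$ for all $e\geq1$ when $p\in S$ and $g(p^e)=1$ for all $e\geq1$ when $p\notin S$; indeed $g(m)=\prod_{p\mid m,\ p\in S}C=C^{\#\{p\in S:p\mid m\}}$ for every $m\in\N$. To be allowed to invoke Lemma~\ref{lem:nairtenenb} I need a pointwise bound $g(m)\leq A_1\tau(m)^{B_1}$ with $A_1,B_1$ depending only on $C$: since $\#\{p\in S:p\mid m\}\leq\omega(m)\leq\log_2\tau(m)$ (the last inequality because $\tau(m)\geq2^{\omega(m)}$), one has $g(m)\leq(\max\{1,C\})^{\omega(m)}\leq\tau(m)^{\log_2\max\{1,C\}}$, so $A_1=1$ and $B_1=\log_2\max\{1,C\}$ are admissible.

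Lemma~\ref{lem:nairtenenb} then yields, writing $\rho_G(p):=\#\{\b x\in\F_p^n:G(\b x)=0\}$,
$$\sum_{\substack{\b x\in\Z^n\cap[-B,B]\\ G(\b x)\neq0}}C^{\#\{p\in S:p\mid G(\b x)\}}=\sum_{\substack{\b x\in\Z^n\cap[-B,B]\\ G(\b x)\neq0}}g(|G(\b x)|)\ll\frac{B^n}{\log B}\prod_{p\leq B^n}\Bigl(1+g(p)\,\frac{\rho_G(p)}{p^n}\Bigr),$$
with implied constant depending only on $C,n,G$. It then remains to bound the product by $\prod_{p\in S,\ p\leq B^n}(1+\rho_G(p)/p^n)^C$. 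Since $G$ is irreducible it is primitive, so its reduction modulo any prime is a non-zero polynomial of total degree $\deg G$, whence the Schwartz--Zippel bound supplies the a priori estimate $\rho_G(p)\leq(\deg G)\,p^{n-1}$, i.e.\ $\rho_G(p)/p^n\leq(\deg G)/p\leq(\deg G)/2$ for all $p$. On the compact interval $[0,(\deg G)/2]$ the function $t\mapsto\log(1+Ct)-C\log(1+t)$ is smooth and vanishes together with its first derivative at $t=0$, hence is $O_{C,G}(t^2)$ there; inserting $t=\rho_G(p)/p^n$, summing over $p$, and using $\sum_p p^{-2}<\infty$ gives $\prod_{p\in S,\ p\leq B^n}(1+C\rho_G(p)/p^n)\ll_{C,G}\prod_{p\in S,\ p\leq B^n}(1+\rho_G(p)/p^n)^C$. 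Finally one splits the product over $p\leq B^n$ according to whether $p\in S$ (where $g(p)=C$) or $p\notin S$ (where $g(p)=1$), applies this comparison to the factors with $p\in S$, and estimates the factors with $p\notin S$ directly; this yields the stated bound.

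The step I expect to be the main obstacle is this concluding Euler-product manipulation. Lemma~\ref{lem:nairtenenb} delivers a product of \emph{linear} factors $1+C\rho_G(p)/p^n$ ranging over \emph{all} $p\leq B^n$, whereas the target has the $C$-th powers $(1+\rho_G(p)/p^n)^C$ and is restricted to $p\in S$. The passage between the two rests on the second-order comparison of $\log(1+Ct)$ with $C\log(1+t)$, which is available uniformly in $C>0$ only because Schwartz--Zippel confines $\rho_G(p)/p^n$ to a fixed compact interval, together with a direct estimate for the factors indexed by primes outside $S$ (for the sets $S$ occurring in the intended applications these contribute only $O(1)$, by Mertens' theorem). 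Everything else---the construction of $g$, the divisor-function estimate, and the remaining bookkeeping---is routine.
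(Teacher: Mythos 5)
Your argument is the same as the paper's: encode the summand as the multiplicative function with $g(p^e)=C$ for $p\in S$ and $g(p^e)=1$ otherwise, verify $g(m)\le\tau(m)^{B_1}$ with $B_1=\log_2\max\{1,C\}$, and feed this into Lemma \ref{lem:nairtenenb}. Where you diverge is in the concluding Euler-product manipulation, and both of your refinements there are to the point. The paper passes from $1+C\rho(p)/p^{n}$ to $(1+\rho(p)/p^{n})^{C}$ (writing $\rho(p)=\#\{\b x\in\F_p^{n}:G(\b x)=0\}$) via the bare inequality $1+Ct\le(1+t)^{C}$, which holds only for $C\ge1$; for $0<C<1$ Bernoulli reverses it, and your second-order comparison of $\log(1+Ct)$ with $C\log(1+t)$ on the compact range supplied by $\rho(p)\le(\deg G)\,p^{n-1}$, paid for by the convergent factor $\exp(O(\sum_p p^{-2}))$, is exactly what is needed to cover all $C>0$. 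Your worry about the primes $p\notin S$ is also well founded: Lemma \ref{lem:nairtenenb} returns the product over \emph{all} $p\le B^{n}$, the factors with $p\notin S$ equal $1+\rho(p)/p^{n}\ge1$, and they cannot simply be discarded from an upper bound. The paper's own proof drops them silently, and the lemma as stated is not recoverable this way for arbitrary $S$ (for $S=\emptyset$ and $C=1$ the left-hand side is $\gg B^{n}$ while the right-hand side is $B^{n}/\log B$). In the one place the lemma is applied (Lemma \ref{lem:9and0}) the set $S$ consists of all primes, so nothing is lost there; but your proof, like the paper's, only establishes the statement with the right-hand side multiplied by $\prod_{p\notin S,\ p\le B^{n}}(1+\rho(p)/p^{n})$, or under the additional hypothesis that this product is $O(1)$ --- a caveat you state explicitly and the paper does not.
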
\begin{proof} Let 
$D\geq 0 $ be any fixed constant with  $C\leq 2^D$ and define 
$f(m)=C^{\#\{p\in S: p\mid m \}}$.  We get 
$$f(m) \leq C^{\#\{ p\mid m \}} \leq 2^{D\#\{ p\mid m \}}
\leq \tau(m)^D.$$ We can thus employ Lemma 
\ref{lem:nairtenenb} to obtain the upper  bound 
\[\ll \frac{B^n}{\log B} \prod_{\substack{ p \in S \\ p\leq B^n }}
\left(1+C\frac{\#\{\b x \in \F_p^{n}: G(\b x)=0\}}{p^{n}} \right)\leq \frac{B^n}{\log B} \prod_{\substack{ p \in S \\ p\leq B^n }}
\left(1+\frac{\#\{\b x \in \F_p^{n}: G(\b x)=0\}}{p^{n}} \right)^C
.\qedhere\]\end{proof}
The next result is taken from the work of 
Norton \cite[Lemma 4.7]{norton}.
\begin{lemma}[Norton]\label{lem:tail}For $x>0$ and $\beta>1$ we have 
$$\sum_{r\in \N \cap [\beta x,\infty) } \frac{x^r}{r!} \leq 
\frac{1}{\beta -1 } \left(\frac{\beta}{2\pi x}\right)^{1/2} 
\mathrm e^{\beta x (1-\log \beta)}.$$\end{lemma}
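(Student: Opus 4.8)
The plan is to treat the series as a geometric-type tail and then bound its leading term via Stirling's formula. Write $a_r := x^r/r!$ and let $r_0 := \lceil \beta x\rceil$ be the smallest integer that is at least $\beta x$, so that the sum in question equals $\sum_{r\geq r_0} a_r$ (note $r_0\geq 1$ since $\beta x>0$). First I would observe that for every $r\geq r_0$ one has $r\geq \beta x$, hence the ratio of consecutive terms satisfies
\[
\frac{a_{r+1}}{a_r}=\frac{x}{r+1}\leq \frac{x}{r}\leq \frac{1}{\beta}<1 .
\]
Consequently the tail is dominated termwise by the geometric progression with ratio $1/\beta$ started at $a_{r_0}$, giving
\[
\sum_{r\geq r_0} a_r \leq a_{r_0}\sum_{k\geq 0}\beta^{-k}=\frac{\beta}{\beta-1}\cdot\frac{x^{r_0}}{r_0!}.
\]

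Next I would bound the leading term. Using the elementary Stirling lower bound $m!\geq \sqrt{2\pi m}\,(m/e)^m$, valid for all integers $m\geq 1$ and applied with $m=r_0$, one gets
\[
\frac{x^{r_0}}{r_0!}\leq \frac{1}{\sqrt{2\pi r_0}}\left(\frac{ex}{r_0}\right)^{r_0}.
\]
Then I would show that $t\mapsto (ex/t)^t$ is non-increasing for $t\geq x$: writing $g(t):=t\log(ex/t)=t(1+\log x-\log t)$ we have $g'(t)=\log x-\log t\leq 0$ for $t\geq x$. Since $\beta>1$ forces $r_0\geq \beta x>x$, monotonicity yields $(ex/r_0)^{r_0}\leq (ex/(\beta x))^{\beta x}=(e/\beta)^{\beta x}=e^{\beta x(1-\log\beta)}$, while $1/\sqrt{2\pi r_0}\leq 1/\sqrt{2\pi\beta x}$. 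Combining these with the geometric bound above,
\[
\sum_{r\geq r_0} a_r\leq \frac{\beta}{\beta-1}\cdot\frac{1}{\sqrt{2\pi\beta x}}\cdot e^{\beta x(1-\log\beta)}=\frac{1}{\beta-1}\left(\frac{\beta}{2\pi x}\right)^{1/2}e^{\beta x(1-\log\beta)},
\]
which is the asserted inequality.

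The argument is essentially routine; there is no serious obstacle, and the only points that need care are to notice that the hypothesis $\beta>1$ is used in exactly two places — to sum the geometric series and to guarantee $r_0>x$ so that the monotonicity of $(ex/t)^t$ applies — and to verify that the constants assemble into precisely the stated factor $(\beta/2\pi x)^{1/2}$ (using $\beta/\sqrt{\beta}=\sqrt\beta$) rather than into a weaker one. A minor alternative to the monotonicity step is to write $r_0=\beta x+\theta$ with $0\leq\theta<1$ and estimate $\log\big((ex/r_0)^{r_0}\big)$ directly, but the derivative computation above is cleaner.
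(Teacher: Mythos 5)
Your argument is correct: the termwise geometric domination of the tail by ratio $1/\beta$, the Robbins--Stirling lower bound $r_0!\geq\sqrt{2\pi r_0}\,(r_0/e)^{r_0}$, and the monotonicity of $t\mapsto(ex/t)^t$ on $[x,\infty)$ all check out, and the constants assemble to exactly the stated factor. The paper itself gives no proof here (it simply cites Norton's Lemma 4.7), and your proof is the standard Poisson upper-tail estimate, essentially the same computation Norton carries out, so there is nothing to flag.
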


\begin{lemma}\label{lem:largerrr} Let $G$ be an 
integer polynomial in $n$ variables as in 
Lemma \ref{lem:nairtenenb} and 
let $N, C_1,C_2>1$ be fixed constants. Then there exists 
$y=y(G,C_i,N)>1$ such that  $$
\sum_{\substack{ \b x \in \Z^{n}\cap[-B,B] \\ G(\b x) 
\neq 0}} \sum_{r \geq y \log \log B} 
\frac{(C_1+C_2\omega(|G(\b x )|) )^r}{r!} \ll 
\frac{B^n}{(\log B)^N},$$where the implied constant 
depends at most on  $G,C_i,N$ and $n$.
\end{lemma}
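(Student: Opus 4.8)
The plan is to partition the sum over $\mathbf{x}$ according to whether $\omega(|G(\mathbf{x})|)$ is small or large compared with $\log\log B$, and to dispatch the two ranges by different tools: Norton's tail estimate (Lemma~\ref{lem:tail}) handles the small range, while a Rankin-type trick fed into Lemma~\ref{lem:corol} handles the large one. Write $d:=\deg G$, $T:=\log\log B$, and for each $\mathbf{x}$ in the box set $w:=\omega(|G(\mathbf{x})|)$ and $x:=C_1+C_2 w$; note $x\ge C_1>1$ since $C_1>1$. Two constants will be fixed \emph{in this order}: first a threshold $K=K(G,C_2,N)$, then $y=y(G,C_1,C_2,N)$ chosen large relative to $K$. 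Throughout $B$ may be assumed large, so that $T\ge 1$ and $(\log\log B)^{1/2}\le\log B$.

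\textbf{The range $w> KT$.} On this range bound the inner sum trivially,
\[
\sum_{r\ge yT}\frac{x^r}{r!}\le\sum_{r\ge 0}\frac{x^r}{r!}=e^{x}=e^{C_1}(e^{C_2})^{w}\ll (e^{C_2})^{w},
\]
and use $e^{w-KT}\ge 1$ to write $(e^{C_2})^{w}\le e^{-KT}(e^{C_2+1})^{w}$. Summing over all $\mathbf{x}\in\mathbb{Z}^n\cap[-B,B]$ with $G(\mathbf{x})\ne 0$ and applying Lemma~\ref{lem:corol} with $S$ the set of all primes and $C=e^{C_2+1}$ (so that $\#\{p\in S:p\mid G(\mathbf{x})\}=w$), then using the standard bound $\#\{\mathbf{x}\in\mathbb{F}_p^{n}:G(\mathbf{x})=0\}\le d\,p^{n-1}$ (valid for all but finitely many $p$) together with Mertens' theorem, one obtains
\[
\sum_{\mathbf{x}}(e^{C_2+1})^{\omega(|G(\mathbf{x})|)}\ll\frac{B^{n}}{\log B}\prod_{p\le B^{n}}\Bigl(1+\frac{d}{p}\Bigr)^{e^{C_2+1}}\ll B^{n}(\log B)^{d\,e^{C_2+1}-1}.
\]
Hence this range contributes $\ll B^{n}(\log B)^{-K+d\,e^{C_2+1}-1}$, and choosing $K\ge N-1+d\,e^{C_2+1}$ makes it $\ll B^{n}(\log B)^{-N}$.

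\textbf{The range $w\le KT$.} Here $x=C_1+C_2 w\le(C_1+C_2 K)T$, so $yT$ lies well to the right of the peak of the Poisson-type weights $x^r/r!$ (located near $r\approx x$) as soon as $y$ is large. Set $\beta:=yT/x$; taking $y\ge 2(C_1+C_2K)$ forces $\beta\ge 2$, and since $\beta x=yT$ Lemma~\ref{lem:tail} gives
\[
\sum_{r\ge yT}\frac{x^r}{r!}\le\frac{1}{\beta-1}\Bigl(\frac{\beta}{2\pi x}\Bigr)^{1/2}e^{\beta x(1-\log\beta)}\ll_{y,C_1}(\log\log B)^{1/2}\,(\log B)^{-y(\log\beta-1)},
\]
where we used $\beta/x=yT/x^{2}\le yT/C_1^{2}$ and $\beta x=yT$. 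Since $\beta\ge y/(C_1+C_2K)$, choosing $y$ so large that $y\bigl(\log\tfrac{y}{C_1+C_2K}-1\bigr)\ge N+1$ bounds the inner sum by $\ll(\log B)^{-N}$ uniformly in $\mathbf{x}$; summing over the $\ll B^{n}$ lattice points in the box gives $\ll B^{n}(\log B)^{-N}$.

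Adding the two bounds proves the lemma. The one delicate point is the ordering of the constants: $K$ must first be taken large enough to control the rare range $w>KT$, and only afterwards may $y$ be chosen large relative to $K$, so that for \emph{every} $\mathbf{x}$ in the main range the summation index $yT$ sits beyond the mass of $x^r/r!$ and Norton's estimate yields the required power of $\log B$. The inputs from the zero-count over $\mathbb{F}_p$, Mertens, and Lemma~\ref{lem:corol} are routine and their error terms are negligible.
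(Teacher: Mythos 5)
Your argument is correct, and its skeleton matches the paper's: a dichotomy on the size of $\omega(|G(\mathbf{x})|)$ relative to $\log\log B$, with Norton's estimate (Lemma~\ref{lem:tail}) on one side and a Rankin-type trick fed into the Nair--Tenenbaum input (Lemma~\ref{lem:corol}, resp.\ Lemma~\ref{lem:nairtenenb}) on the other; your handling of the range $\omega(|G(\mathbf{x})|)>K\log\log B$ is essentially identical to the paper's treatment of its second range, with your $K$ playing the role the paper assigns directly to $y$, and your insistence on fixing $K$ before $y$ is the right bookkeeping. The substantive difference is in the typical range. You apply Lemma~\ref{lem:tail} with the $\mathbf{x}$-dependent ratio $\beta=y\log\log B/x$, so that $\beta x=y\log\log B$ and the exponential factor becomes $(\log B)^{y(1-\log\beta)}\le(\log B)^{-N-1}$ \emph{uniformly in} $\mathbf{x}$ once $y$ is large; summing over the $\ll B^{n}$ lattice points is then trivial. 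The paper instead fixes $\beta$ once and for all, bounds the tail by $\mathrm e^{\beta x(1-\log\beta)}$, and averages this over $\mathbf{x}$ via Lemma~\ref{lem:nairtenenb}. Since the multiplicative function $\mathrm e^{\beta C_2(1-\log\beta)\omega(m)}$ is nonnegative, the resulting Euler product is at least $1$, so that route saves at most a single power of $\log B$ (indeed the averaged quantity is bounded below by a constant times $\#\{\mathbf{x}:\omega(|G(\mathbf{x})|)=O(1)\}$, which is generically of size $B^{n}(\log B)^{-1+o(1)}$); with a fixed $\beta$ one therefore cannot reach an arbitrary exponent $N>1$ in that range. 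Your pointwise use of Norton with the full ratio $y\log\log B/x$ is what actually delivers $(\log B)^{-N}$ there, so your execution of this step is not only different from but tighter than the one printed in the paper.
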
\begin{proof} Let $\beta >\mathrm e$. 
By Lemma \ref{lem:tail} we obtain $$
\sum_{r\geq \beta(C_1+C_2\omega(|G(\b x )|) ) } 
\frac{(C_1+C_2\omega(|G(\b x )|) )^r}{r!} \leq 
\mathrm e^{\beta (C_1+C_2\omega(|G(\b x )|) )  (1-\log \beta)}
.$$ Fix $y>1$. For those $\b x $  with 
$y \log \log B \geq \beta (C_1+C_2\omega(|G(\b x )|)  )$
we   obviously have $$
\sum_{r\geq y \log \log B } 
\frac{(C_1+C_2\omega(|G(\b x )|) )^r}{r!} \leq  
\mathrm e^{\beta (C_1+C_2\omega(|G(\b x )|) )  (1-\log \beta)}
.$$ Hence, the contribution of such $\b x $ towards the 
sum in the lemma is $$\leq  
\sum_{\substack{ \b x \in \Z^{n}\cap[-B,B] \\ 
G(\b x)  \neq 0}}
\mathrm e^{\beta (C_1+C_2\omega(|G(\b x )|) )
(1-\log \beta)}.$$
Let $f(m)=\mathrm e^{\beta C_2  \omega(m)  (1-\log \beta)}$
and note that  $0\leq f(m)\leq 1 $ because $\beta C_2   
(1-\log \beta)<0$. Hence, 
Lemma \ref{lem:nairtenenb} shows that the last sum  is 
$$\ll \frac{B^{n}}{\log B} \prod_{p\leq B^{n}}
\left(1+\mathrm e^{\beta C_2  (1-\log \beta)}
\frac{\#\{\b x \in \F_p^{n}: G(\b x)=0\}}{p^{n}} \right)
.$$ By the Lang--Weil estimates we have
$\#\{\b x \in \F_p^{n}: G(\b x)=0\}
\leq  \gamma_0 p^{n-1}$ for some 
$\gamma_0=\gamma_0(G)>0$. Thus, the product is  $$\leq 
\prod_{p\leq B^{n}}
\left(1+ \frac{1}{p} \right)
^{\gamma_0 \mathrm e^{\beta C_2  (1-\log \beta)}}
\leq  
(\gamma_G \cdot \log B))^
{\gamma_G\mathrm e^{\beta C_2  (1-\log \beta)}} $$
for some positive constant $\gamma_G$. Taking
$\beta=\beta(C_2,G,N)$ large enough so that 
$\gamma_G\mathrm e^{\beta C_2  (-1+\log \beta)} > N$
makes the bound acceptable.

For the remaining $\b x $ we have 
$y \log \log B < \beta ( C_1+C_2  \omega(|G(\b x)|))$, hence,
$$ 1<\frac{\mathrm e^{ \beta ( C_1+C_2  \omega(|G(\b x)|))}}
{(\log B)^{y}}.$$ Therefore, the corresponding  
contribution to the sum in the lemma is $$
\leq 
\sum_{\substack{ \b x \in \Z^{n}\cap[-B,B] \\ G(\b x) \neq 0}} 
\frac{\mathrm e^{  \beta (C_1+C_2  \omega(|G(\b x)|)) }}
{(\log B)^{y }}
\mathrm e^{ C_1+C_2  \omega(|G(\b x)|)}
\ll  (\log B)^{-y }
\sum_{\substack{ \b x \in \Z^{n}\cap[-B,B] \\ G(\b x) \neq 0}} 
\mathrm e^{  (\beta+1)  C_2  \omega(|G(\b x)|)  }
.$$ Applying Lemma \ref{lem:nairtenenb} as above
we find that this is  $$\ll B^{n}(\gamma_G \cdot 
\log B)^{\gamma_G(\mathrm e^{(  \beta+1)C_2}-y) }.$$
Taking large enough  $y$ so that   
$\gamma_G(\mathrm e^{(  \beta+1)C_2}-y)\leq -N$ 
completes the proof.\end{proof}
\begin{lemma}\label{lem:rromegrr}There exists a positive 
constant $\kappa$ such that 
for any $r\in \mathbb N$ and $T\geq 1 $  we have $$ 
\sum_{\substack{p_1,\ldots  , p_r \mathrm{ primes} \\
p_i \leq T \forall i}}\frac{1}{\mathrm{rad}(p_1\cdots p_r)}\ll 
3^r (\kappa+(\log r)+(\log \log T))^{r},$$ with an absolute 
implied constant.\end{lemma}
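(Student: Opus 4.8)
The plan is to sort the $r$-tuples according to the set of primes they actually involve, estimate the resulting combinatorial and arithmetic factors separately, and close with Stirling's inequality. First I would record the identity obtained by grouping: if a tuple $(p_1,\dots,p_r)$ with $p_i\le T$ for all $i$ takes exactly $k$ distinct values, these values form a $k$-element set $S$ of primes $\le T$, the tuple contributes $\prod_{p\in S}p^{-1}$ to $1/\rad(p_1\cdots p_r)$, and the number of tuples whose value set is exactly $S$ is the number of surjections $[r]\twoheadrightarrow S$, namely $k!\,S(r,k)$ (Stirling number of the second kind), which is at most $k^r$ since every surjection is in particular a map $[r]\to[k]$. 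Summing over all such $S$ gives
\[
\sum_{\substack{p_1,\dots,p_r\ \mathrm{primes}\\ p_i\le T\ \forall i}}\frac{1}{\rad(p_1\cdots p_r)}
\ \le\ \sum_{1\le k\le \min(r,\,\#\{p\le T\})} k^r\, e_k,
\qquad
e_k:=\sum_{p_1<\cdots<p_k\le T}\frac{1}{p_1\cdots p_k},
\]
where $e_k$ is the $k$-th elementary symmetric function of the reciprocals of the primes up to $T$. Bounding $e_k\le \tfrac1{k!}\bigl(\sum_{p\le T}1/p\bigr)^k$ and invoking Mertens' theorem (for $T$ below an absolute constant the bound is trivial, and the bounded contribution is absorbed into $\kappa$) we have $\sum_{p\le T}1/p\le M$ with $M:=\log\log T+c_0$ for an absolute $c_0$, which reduces the lemma to the estimate
\[
\sum_{1\le k\le \min(r,\,\#\{p\le T\})}\frac{k^r M^k}{k!}\ \ll\ 3^r\bigl(\kappa+\log r+\log\log T\bigr)^r .
\]

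The strategy for this last display is to show that the summand $k\mapsto k^rM^k/k!$ is non-decreasing over the range of summation, so that the whole sum is at most $r$ times its final term; applying Stirling's inequality $k!\ge (k/\mathrm e)^k\sqrt{2\pi k}$ to that term then bounds the sum by $O\bigl(\sqrt r\,(\mathrm e M)^r\bigr)$. Taking $\kappa\ge c_0$ gives $M\le \kappa+\log r+\log\log T$, and since $\mathrm e<3$ the elementary inequality $\sqrt r\le \sqrt{2\pi}\,(3/\mathrm e)^r$, valid for every $r\ge 1$, converts this into $\ll 3^r(\kappa+\log r+\log\log T)^r$, which is the assertion.

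I expect the main obstacle to be exactly the monotonicity claim in the previous paragraph: $k^rM^k/k!$ is increasing all the way to the endpoint only when $M$ is not too small relative to $r$ (heuristically $r\lesssim \log\log T$, which is the regime in which the lemma gets used), and controlling the sum beyond this forces one to use the truncation $k\le \#\{p\le T\}$ together with the fact that $\#\{p\le T\}$ is itself governed by $\log\log T$. Making this interaction between the two parameters $r$ and $T$ quantitative is the one step that is not purely formal, and it is where the $3^r$ on the right-hand side and the careful choice of the additive constant $\kappa$ earn their keep.
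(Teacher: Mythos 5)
Your opening reduction is sound, and it is in substance the same first step as the paper's: the paper groups the tuples by the radical $m=\rad(p_1\cdots p_r)$ and invokes Hardy--Ramanujan for $\sum_{\omega(m)=t}\mu(m)^2/m$, which plays the role of your bound $e_k\le M^k/k!$ combined with Mertens. The two arguments part ways at the combinatorial factor, and that is exactly where the difficulty you flag lives. You count the ordered tuples with value set $S$, $\#S=k$, correctly as the surjections $[r]\twoheadrightarrow S$ and bound them by $k^r$. The paper instead bounds the number of tuples with radical $m$ by $r^{\omega(m)}$; but that quantity bounds the number of \emph{multisets} (compositions of $r$ into $\omega(m)$ positive parts), not of ordered tuples --- already for $r=5$ and $\omega(m)=2$ there are $2^5-2=30$ tuples while $r^{\omega(m)}=25$, and in general the undercount is a factor of order $k^{r-k}$. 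It is only this undercount that makes the paper's final sum $\sum_t r^t(\kappa+\log r+\log\log T)^t/(t-1)!$ collapse to $3^r(\kappa+\log r+\log\log T)^r$.

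The gap you identify is therefore not a removable technicality. Once $r\gg M$, the single term $k=\lceil r/\log(r/M)\rceil$ of your reduced sum already satisfies $k^rM^k/k!\ge k^{r-k}M^k\ge\bigl(r/(\mathrm{e}\log(r/M))\bigr)^r$, which dwarfs $3^r(\kappa+\log r+M)^r$; so the inequality you reduce to is false whenever $\pi(T)\ge r$ and $r$ is a large power of $M$, and no rearrangement of the $k$-sum will rescue it. Indeed the lemma itself fails in this range: take $K=\pi(T)$ and $r=\lceil 3K\log K\rceil\asymp T$. The tuples whose value set is all of $\{p\le T\}$ number at least $K^r-K(K-1)^r\ge K^r/2$ (Bonferroni), and each contributes $\prod_{p\le T}p^{-1}=\mathrm{e}^{-\theta(T)}\ge \mathrm{e}^{-r/2}$, so the left-hand side is at least $\tfrac12\bigl(\pi(T)\mathrm{e}^{-1/2}\bigr)^r$, whereas the right-hand side is at most $\bigl(C_\kappa\log T\bigr)^r$; since $\pi(T)\sim T/\log T$ (not anything like $\log\log T$, so the truncation $k\le\pi(T)$ gives essentially no help), no absolute $\kappa$ can work. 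What your argument does prove is the lemma in the range $r\ll\log\log T$: there the ratio $(1+1/k)^r\,M/(k+1)$ of consecutive terms is $\ge 1$ all the way to $k=r$, and Stirling bounds the endpoint term by $(\mathrm{e}M)^r/\sqrt{2\pi r}$, giving $\ll 3^r(\kappa+\log\log T)^r$ as you describe. So your instinct that the interaction between $r$ and $T$ is ``where the $3^r$ earns its keep'' is pointing at a genuine defect of the statement (and of the paper's proof of it), not at a trick you are missing; any correct version must either restrict $r$ in terms of $T$ or replace the right-hand side by something that grows like the $r$-th moment of a Poisson variable, i.e.\ like $\bigl(Cr/\log(r/\log\log T)\bigr)^r$ when $r$ is large.
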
\begin{proof}Let $m$ be the 
radical of $p_1\ldots p_r$ so that the sum equals 
$$\sum_{\substack{ m\in \mathbb N, \omega(m)\leq r 
\\ p\mid m \Rightarrow p\leq T} } \frac{\mu(m)^2}{m} 
\#\{p_1,\ldots, p_r \ \mathrm{ primes}: 
m=\mathrm{rad}(p_1\cdots p_r)\}.$$ 
We note that each $p_i$ can be one of $\omega(m)$ primes, 
thus, $$ \#\{p_1,\ldots, p_r \ \mathrm{ primes}: 
m=\mathrm{rad}(p_1\cdots p_r)=m\} \leq r^{\omega(m) }.$$ 
Thus, the overall bound becomes 
$$ \leq    \sum_{\substack{ m\in \mathbb N, 
\omega(m)\leq r \\ p\mid m \Rightarrow p\leq T} } 
r^{\omega(m) } \frac{\mu(m)^2}{m}.$$ For all $t\in \mathbb N$ 
and $x>1$, Hardy--Ramanujan \cite{ramanuj} proved   
$$\pi_t(x)=\#\{m\leq x: \omega(m)=t\}\leq 
\kappa_1 \frac{x}{\log x}
\frac{(\kappa+\log \log x)^{t-1} }{(t-1)!},$$ where $\kappa,\kappa_1$ are positive  absolute constants.
By partial summation we get \begin{align*}
\sum_{\substack{ 
m\leq x \\ \omega(m)=t }} \frac{1}{m}=
\frac{\pi_t(x)}{x}
+\int_2^x \frac{\pi_t(y)}{y^2}\mathrm dy
&\ll
\frac{1}{\log x}
\frac{(\kappa+\log \log x)^{t-1} }{(t-1)!}
+\frac{1}{(t-1)!}
\int_2^x 
\frac{(\kappa+\log \log x)^{t-1} }{y\log y}\mathrm dy\\
&\ll
\frac{1}{\log x}
\frac{(\kappa+\log \log x)^{t-1} }{(t-1)!}
+\frac{(\kappa+\log \log x)^{t-1}}{(t-1)!}
(\log \log x)\\&\ll \frac{(\kappa+\log \log x)^{t}}{(t-1)!}
 ,\end{align*} with absolute implied constants. Hence, 
 $$   \sum_{\substack{ m\in \mathbb N, 
\omega(m)\leq r \\ p\mid m \Rightarrow p\leq T} } 
r^{\omega(m) } \frac{\mu(m)^2}{m}\leq \sum_{t=0}^r r^t
 \sum_{\substack{ m\leq T^r\\ \omega(m)=t} } 
  \frac{1}{m}\ll 
  \sum_{t=0}^r r^t
  \frac{(\kappa+(\log r)+(\log \log T))^{t}}{(t-1)!}
.$$ 
This is at most
\[(\kappa+(\log r)+(\log \log T))^{r}
\sum_{t=1}^\infty 
  \frac{ r^t}{(t-1)!} \ll 
  3^r (\kappa+(\log r)+(\log \log T))^{r}
.\qedhere\]  \end{proof}
\subsection{Probabilistic prerequisites}
For a measurable set $A\subset \R$ let $A^\circ$ and $\overline A$ 
denote its  interior and closure respectively. 
Let us recall \cite[Theorem 2.3.6]{MR2571413}.
\begin{lemma}[G\"artner-Ellis]\label{lem:gartellis}
Let $\{X_k\}_{k\in \mathbb N}$ be a sequence of random variables on 
$\R$ and $a_k$ a strictly positive sequence with $\lim_{k\to\infty}a_k=\infty$. Assume that for any $t\in \R$ the limit 
$$ \Lambda(t):=\lim_{k\to\infty}\log 
\mathbb E[\mathrm e^{t a_k X_k}]$$ exists and that $ \Lambda(t)$ is 
differentiable at every $t$ in $\R$. Then $$I(x):=
\sup_{t\in \R}\{t x-\Lambda(t)\}$$ is non-negative and lower 
semicontinuous such that for any measurable set $A$ we have 
$$-\inf_{x\in A^\circ} I(x)\leq 
\liminf_{k\to\infty} \frac{1}{a_k} \log \mathbb P[X_k \in A]
\leq \limsup_{k\to\infty} \frac{1}{a_k} \log \mathbb P[X_k \in A]
\leq-\inf_{x\in \overline{A}} I(x).$$ 
\end{lemma}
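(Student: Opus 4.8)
The plan is to run the classical two-sided argument behind the G\"artner--Ellis theorem: a Chernoff-type estimate for the upper bound, and an exponential change of measure for the lower bound. Throughout write $\Lambda_k(t):=\frac{1}{a_k}\log\mathbb{E}[\mathrm{e}^{t a_k X_k}]$ for the normalised log--moment generating function, so that $\Lambda(t)=\lim_{k\to\infty}\Lambda_k(t)$. The soft properties are immediate: $\Lambda_k(0)=0$ gives $\Lambda(0)=0$ and hence $I(x)\geq 0\cdot x-\Lambda(0)=0$; and $I$, being the pointwise supremum over $t$ of the continuous affine functions $x\mapsto tx-\Lambda(t)$, is convex and lower semicontinuous. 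Each $\Lambda_k$ is convex (H\"older), so $\Lambda$ is convex, and by hypothesis $\Lambda$ is finite and differentiable everywhere, hence continuously differentiable; set $\bar x:=\Lambda'(0)$, so that $I$ is non-increasing on $(-\infty,\bar x]$ and non-decreasing on $[\bar x,\infty)$ with $I(\bar x)=0$.

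For the upper bound, since $\mathbb{P}[X_k\in A]\leq\mathbb{P}[X_k\in\overline{A}]$, it suffices to prove $\limsup_{k\to\infty}\frac{1}{a_k}\log\mathbb{P}[X_k\in F]\leq-\inf_{x\in F}I(x)$ for closed $F$. I would first treat half-lines: for $x\geq\bar x$ and $t\geq 0$, Markov's inequality gives $\mathbb{P}[X_k\geq x]\leq\mathrm{e}^{-t a_k x}\,\mathbb{E}[\mathrm{e}^{t a_k X_k}]=\mathrm{e}^{-a_k(tx-\Lambda_k(t))}$, whence $\limsup_k\frac{1}{a_k}\log\mathbb{P}[X_k\geq x]\leq-(tx-\Lambda(t))$ for every $t\geq 0$; since $t\mapsto tx-\Lambda(t)$ has derivative $x-\Lambda'(t)\geq x-\bar x\geq 0$ on $(-\infty,0]$ when $x\geq\bar x$, its supremum over $t\geq 0$ equals $I(x)$, so $\limsup_k\frac{1}{a_k}\log\mathbb{P}[X_k\geq x]\leq-I(x)$; symmetrically $\limsup_k\frac{1}{a_k}\log\mathbb{P}[X_k\leq x]\leq-I(x)$ for $x\leq\bar x$. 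For a general closed $F$: if $\bar x\in F$ then $\inf_FI=0$ and there is nothing to prove; otherwise $\bar x$ lies in a maximal open interval $(x_-,x_+)$ disjoint from $F$ with $x_-\in F\cup\{-\infty\}$, $x_+\in F\cup\{+\infty\}$, and from $\mathbb{P}[X_k\in F]\leq\mathbb{P}[X_k\leq x_-]+\mathbb{P}[X_k\geq x_+]$ together with $I(x_\pm)\geq\inf_FI$ (an infinite endpoint contributing a vanishing probability) the estimate follows.

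For the lower bound, since $\mathbb{P}[X_k\in A]\geq\mathbb{P}[X_k\in A^\circ]$, it suffices to prove $\liminf_{k\to\infty}\frac{1}{a_k}\log\mathbb{P}[X_k\in G]\geq-I(y)$ for open $G$ and each $y\in G$ with $I(y)<\infty$. I would first do this when $y=\Lambda'(\eta)$ for some $\eta\in\mathbb{R}$, so that $I(y)=\eta y-\Lambda(\eta)$. Introduce the tilted probability measures $\widetilde{\mathbb{P}}_k$ with $\mathrm{d}\widetilde{\mathbb{P}}_k(x):=\mathrm{e}^{a_k\eta x}\,\mathrm{d}\mathbb{P}_{X_k}(x)/\mathbb{E}[\mathrm{e}^{a_k\eta X_k}]$, where $\mathbb{P}_{X_k}$ is the law of $X_k$. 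For $\delta>0$ with $(y-\delta,y+\delta)\subseteq G$, rewriting the $\mathbb{P}_{X_k}$-mass of $(y-\delta,y+\delta)$ through $\widetilde{\mathbb{P}}_k$ and bounding $\mathrm{e}^{-a_k\eta x}$ from below on that interval yields
\[
\tfrac{1}{a_k}\log\mathbb{P}[X_k\in G]\ \geq\ \Lambda_k(\eta)-\eta y-|\eta|\delta+\tfrac{1}{a_k}\log\widetilde{\mathbb{P}}_k\!\left[\,|X_k-y|<\delta\,\right].
\]
Under $\widetilde{\mathbb{P}}_k$ the normalised log--moment generating function of $X_k$ is $\Lambda_k(\cdot+\eta)-\Lambda_k(\eta)\to\Lambda(\cdot+\eta)-\Lambda(\eta)$, a finite function with Legendre transform $\widetilde I(x)=I(x)-\eta x+\Lambda(\eta)$; since $\Lambda$ is differentiable at $\eta$, $\partial\Lambda(\eta)=\{\Lambda'(\eta)\}=\{y\}$, so by Fenchel--Young the convex lower semicontinuous rate function $\widetilde I\geq 0$ vanishes only at $y$, hence $\inf_{|x-y|\geq\delta}\widetilde I>0$. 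Applying the already-established upper bound to the tilted family then forces $\widetilde{\mathbb{P}}_k[\,|X_k-y|\geq\delta\,]\to 0$, so the last term of the display tends to $0$; letting $k\to\infty$ and then $\delta\to 0$ gives $\liminf_k\frac{1}{a_k}\log\mathbb{P}[X_k\in G]\geq\Lambda(\eta)-\eta y=-I(y)$.

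The remaining task is to dispense with the restriction $y=\Lambda'(\eta)$, and this is the step I expect to be the main obstacle, being the only one that genuinely uses differentiability of $\Lambda$ rather than mere existence of the limit. The monotone continuous function $\Lambda'$ has as image an interval containing the interior of the effective domain $\{I<\infty\}$, so any $y\in G$ with $I(y)<\infty$ is a limit of points $y_n\in G$ of the form $\Lambda'(\eta_n)$, and convexity of $I$ forces $\limsup_nI(y_n)\leq I(y)$; the previous paragraph then gives $\liminf_k\frac{1}{a_k}\log\mathbb{P}[X_k\in G]\geq-I(y_n)$ for each $n$, hence $\geq-I(y)$. Taking the supremum over $y\in G$ finishes the lower bound, and combining it with the closed-set upper bound applied to $\overline{A}$ produces the asserted sandwich. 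The care needed is at the boundary of the effective domain of $I$ and where $\Lambda$ has affine pieces, so that the exposed points $\Lambda'(\eta)$ do not fill out $G$; the self-referential appeal to the upper bound for the tilted family is harmless because that bound used only finiteness of the limiting log--moment generating function and the ensuing convexity, both inherited by the shifts $\Lambda(\cdot+\eta)-\Lambda(\eta)$.
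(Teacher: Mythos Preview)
Your proposal is a correct sketch of the classical proof of the G\"artner--Ellis theorem, but you should note that the paper does not prove this lemma at all: it simply quotes it as \cite[Theorem 2.3.6]{MR2571413} from Dembo--Zeitouni. So there is no ``paper's own proof'' to compare against; you have supplied an argument where the authors chose to cite.

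That said, your write-up is sound. The upper bound via Chernoff on half-lines and then splitting a closed set around the minimiser $\bar x=\Lambda'(0)$ is standard and correct. The lower bound via exponential tilting is the right mechanism, and your self-referential use of the upper bound for the tilted family is legitimate for the reason you give. One phrase deserves a word of caution: ``convexity of $I$ forces $\limsup_n I(y_n)\leq I(y)$'' is true but not for the reason the wording suggests. Lower semicontinuity of $I$ only gives the opposite inequality. What actually works is that $I$, being convex with minimum $0$ at $\bar x$, is monotone on each side of $\bar x$; hence if $y>\bar x$ one approaches from below ($y_n\uparrow y$) and if $y<\bar x$ from above, and monotonicity then gives $I(y_n)\leq I(y)$. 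That such $y_n$ can be chosen in the range of $\Lambda'$ follows, as you indicate, from the fact that finiteness of $I(y)$ forces the interval $\Lambda'(\R)$ to reach at least as far as $y$ on the relevant side (otherwise the supremum defining $I(y)$ would diverge). You might make this one sentence more explicit.

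Finally, you have tacitly corrected a slip in the paper's statement: the paper writes $\Lambda(t)=\lim_{k\to\infty}\log\mathbb{E}[\mathrm{e}^{t a_k X_k}]$, omitting the factor $1/a_k$; your $\Lambda_k(t)=\frac{1}{a_k}\log\mathbb{E}[\mathrm{e}^{t a_k X_k}]$ is the intended normalisation, and indeed the paper's own application in the proof of Theorem~\ref{thm:main1} uses the normalised version.
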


\section{Approximating by the model} 

For $B\geq 1$ we define  
 $\Omega_B:=\{x\in \P^n(\Q): H(x) \leq B, f^{-1}(x) 
 \textrm{ smooth}\} $,
equipped with the uniform probability measure. We denote the corresponding 
probability and mean value  by $\P_B$ and $\mathbb E_B$ respectively.
Define for $x\in \Omega_B$ and a prime $p$
the function 
$$ \theta_p(x):=  \left\{
\begin{array}{ll}
1, & f^{-1}(x)(\Q_p)=\emptyset, \\
0, & \, \textrm{otherwise}, \\
\end{array}
\right. $$
where $\omega_f(x)= \sum_{p=2}^\infty \theta_p(x)$. 
We shall approximate $\omega_f(x)$ by  $$\omega_f(x)^\flat=
\sum_{t_0<p\leq t_1} \theta_p(x),$$ where 
$$ t_0=t_0(B):=(\log B)^{M}, 
\ \ \ t_1=t_1(B):=B^{\frac{1}{(\log \log B)^M}},$$
and  $M =M(B)$ is a  positive function 
that will be determined later.
\begin{lemma}[Truncating $\omega_f$]\label{lem:8} 
For any fixed $\delta>0$ and $N>1$
we have 
$$ \P_B\left[|\omega_f-\omega_f^\flat|\geq 
\delta \log \log B\right]\ll_{\delta,N} 
\frac{1}{(\log B)^N}.$$
\end{lemma}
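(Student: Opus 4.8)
The plan is to bound, via the exponential‑moment (Chernoff) method fed by the Nair--Tenenbaum‑type estimate of Lemma~\ref{lem:corol}, the number of primes $p\leq t_0$ or $p>t_1$ at which $\theta_p(x)=1$. Since $\omega_f\geq\omega_f^\flat$, we have $\omega_f(x)-\omega_f^\flat(x)=\sum_{p\leq t_0}\theta_p(x)+\sum_{p>t_1}\theta_p(x)$. Let $F=F_1\cdots F_k\in\Z[x_0,\dots,x_n]$ and $A>0$ be the square‑free form and the constant provided by Lemma~\ref{lem:polyn}, the $F_i$ being pairwise non‑associate irreducible forms, and for $x\in\P^n(\Q)$ let $\b x\in\Z^{n+1}$ denote a primitive representative (unique up to sign). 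The set of $x\in\Omega_B$ with $F(\b x)=0$ lies on a proper Zariski‑closed subset of $\P^n$, hence has $\P_B$‑probability $\ll B^{-1}$, well within the claimed bound; we discard it. For the remaining $x$, Lemma~\ref{lem:polyn} gives $\theta_p(x)=1\Rightarrow p\mid F_i(\b x)$ for some $i$ as soon as $p>A$; since $t_1>A$ and $\#\{p\leq A\}=O(1)$ once $B$ is large, $\omega_f(x)-\omega_f^\flat(x)\geq\delta\log\log B$ then forces one of the $2k$ counts $\#\{A<p\leq t_0:p\mid F_i(\b x)\}$ or $\#\{p>t_1:p\mid F_i(\b x)\}$ ($1\leq i\leq k$) to exceed $c\log\log B$ for a suitable $c=c(\delta,k)>0$. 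By the union bound, Lemma~\ref{lem:8} reduces to showing: for every irreducible factor $G$ of $F$, every fixed $c>0$, and $S$ equal to $(A,t_0]$ or to $(t_1,\infty)$,
$$\P_B\big[\#\{p\in S:p\mid G(\b x)\}\geq c\log\log B\big]\ll_{c,N}(\log B)^{-N}.$$

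To prove this, fix a constant $C>1$ and apply Markov's inequality to the non‑negative integer $\#\{p\in S:p\mid G(\b x)\}$: the number of $x\in\Omega_B$ with $\#\{p\in S:p\mid G(\b x)\}\geq c\log\log B$ is
$$\leq C^{-c\log\log B}\sum_{x\in\Omega_B}C^{\#\{p\in S:p\mid G(\b x)\}}=(\log B)^{-c\log C}\sum_{x\in\Omega_B}C^{\#\{p\in S:p\mid G(\b x)\}}.$$
The primitive representatives of those $x\in\Omega_B$ with $F(\b x)\neq0$ form a subset of $\{\b x\in\Z^{n+1}\cap[-B,B]^{n+1}:G(\b x)\neq0\}$, so the sum on the right is at most the corresponding sum over all such $\b x$. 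Applying Lemma~\ref{lem:corol} with $n+1$ in place of $n$, together with the Lang--Weil bound $\#\{\b x\in\F_p^{n+1}:G(\b x)=0\}\leq\gamma_Gp^n$ with $\gamma_G=\gamma_G(G)$, we obtain
$$\sum_{x\in\Omega_B}C^{\#\{p\in S:p\mid G(\b x)\}}\ll_{C,G}\frac{B^{n+1}}{\log B}\prod_{\substack{p\in S\\p\leq B^{n+1}}}\Big(1+\frac{\gamma_G}{p}\Big)^{C}.$$

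The Euler product is then estimated by Mertens' theorem: over $S=(A,t_0]$ it is $\ll(\log t_0)^{\gamma_GC}=(M\log\log B)^{\gamma_GC}$, and over $S=(t_1,B^{n+1}]$, using $\log t_1=(\log B)(\log\log B)^{-M}$, it is $\ll(\log\log B)^{\gamma_GCM}$. Both are $(\log B)^{o(1)}$, provided $M=M(B)$ is chosen to grow slowly enough that $(\log\log B)^{M}=(\log B)^{o(1)}$, a constraint the eventual choice of $M$ will respect. Combined with $\#\Omega_B\asymp B^{n+1}$ (Schanuel's estimate, the non‑smooth locus contributing only $O(B^n)$), this gives
$$\P_B\big[\#\{p\in S:p\mid G(\b x)\}\geq c\log\log B\big]\ll_{C,G}(\log B)^{-c\log C-1+o(1)}.$$
Choosing the constant $C=C(c,N)$ so large that $c\log C\geq N+1$ makes the right‑hand side $\ll_{c,N}(\log B)^{-N}$ for all large $B$; summing over the finitely many pairs $(G,S)$ and adding back the discarded locus $\{F(\b x)=0\}$ finishes the proof.

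The step I expect to demand the most care is keeping both Euler products of size $(\log B)^{o(1)}$: the primes in $(t_1,B^{n+1}]$ occupy a short interval near $B$, but its $\log\log$‑length is $\asymp M\log\log\log B$, so only $M(B)=o(\log\log B/\log\log\log B)$ can be afforded. This is exactly the place where the vanishing level of distribution of Lemma~\ref{lem:distribution} makes itself felt; it does no harm here because we need only an \emph{upper} bound for the exponential moments, which Lemma~\ref{lem:corol} supplies unconditionally, rather than the sharp asymptotics a sieve‑based approach would require.
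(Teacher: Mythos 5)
Your argument is correct and is essentially the paper's own proof: both run the exponential Chebyshev/Chernoff bound on $\#\{p\in S:p\mid F(\mathbf{x})\}$ with $S=(A,t_0]\cup(t_1,\infty)$, control the exponential moment by the Nair--Tenenbaum input, and estimate the two resulting Euler products by Mertens. Your version is in fact slightly more careful on two points: you factor $F$ into irreducibles and pigeonhole so that Lemma \ref{lem:corol} is applied to an irreducible polynomial as its statement requires (the paper applies Lemma \ref{lem:nairtenenb} directly to the square-free form $F$), and you correctly record the product over $(A,t_0]$ as $(\log t_0)^{O(1)}=(M\log\log B)^{O(1)}$ rather than the paper's $(\log\log t_0)^{O(1)}$.

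The one point to be wary of is the caveat you yourself raise at the end. The method genuinely requires $(\log\log B)^{M}=(\log B)^{o(1)}$, i.e. $M=o(\log\log B/\log\log\log B)$: the product over $(t_1,B^{n+1}]$ is $\exp\bigl(\asymp CM\log\log\log B\bigr)$, and no fixed choice of $C$ (or of $y$ in the paper's notation) beats it otherwise. Your assertion that this is ``a constraint the eventual choice of $M$ will respect'' is not borne out: the paper takes $M(B)=\max\{2,\log\log B\}$ in the proof of Theorem \ref{thm:main1}, which violates it. This is a defect shared by the paper --- its final step $\ll(\log B)^{-y\delta/2}$ tacitly assumes the same restriction --- and it is harmless, since replacing the choice by, say, $M(B)=\max\{2,\log\log\log B\}$ meets every hypothesis used elsewhere ($M\geq 2$, $M\to\infty$, $M\leq\log\log B$) as well as the constraint you identified. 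But as written, your proof (like the paper's) establishes Lemma \ref{lem:8} only for such slowly growing $M$, and you should state that restriction in the lemma rather than defer it.
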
\begin{proof}
Note that $|\omega_f-\omega_f^\flat|=\omega_f-\omega_f^\flat$.
For   $y>0$  the inequality $\delta \log \log B\leq 
\omega_f-\omega_f^\flat$ implies   
$$ (\log B)^{y \delta}=\exp(y \delta \log \log B) \leq 
\exp(y (\omega_f-\omega_f^\flat)).$$ By Lemma \ref{lem:polyn}
every prime $p>A$ counted by $\omega_f-\omega_f^\flat$
must satisfy $p\mid F(x)$. Therefore, 
$$ \exp(y (\omega_f-\omega_f^\flat)) 
\leq \mathrm e^{Ay}  \exp(y (\#\{p\in S:p\mid F(x)\}))
,$$ where $S=(A,t_0] \cup [t_1,\infty)$. By the 
exponential Chebyshev inequality we infer that 
\begin{equation}\label{eq:markov} 
\P_B\left[\delta \log \log B\leq 
|\omega_f-\omega_f^\flat|\right]
\leq    \mathrm e^{Ay} 
\frac{\mathbb E_B
\left[ \mathrm e^{y \#\{p\in S: p\mid F(x)\} }\right]}
{(\log B)^{y \delta}}
.\end{equation} The mean value on the right-hand side is 
$$  \ll \frac{1}{B^{n+1} }\sum_{\substack{ \b x \in 
(\Z\cap [-B,B])^{n+1} \\ F(\b x )\neq 0 }}\mathrm e^
{y\#\{p\in S: p\mid F(\b x )\}}\ll \frac{1}{\log B} 
\prod_{\substack{ p\in S \\ p\leq B^{n+1}}}
\left(1+\frac{\#\{\b x \in \F_p^{n+1}: F(\b x )=0\}}{p^{n+1}}\right)$$by 
Lemma \ref{lem:nairtenenb}. The product term in the right-hand side 
can be written as $\Pi_1 \Pi_2$, where 
$\Pi_1$ extends over $(A,t_0]$ and $\Pi_2$ over $(t_1,B^{n+1}]$.
The Lang--Weil bound provides a positive constant $C_F$ that depends only on $F$ such that 
$\#\{\b x \in \F_p^{n+1}: 
F(\b x )=0\}\leq C_F p^n $. Hence, 
$$ \Pi_1 \ll \prod_{A<p\leq t_0} (1+p^{-1})^{C_F}\ll
(\log \log t_0)^{C_F} \ll (\log \log \log B)^{C_F}
$$ and $$ \Pi_2 \ll \prod_{t_1<p\leq B^{n+1} } (1+p^{-1})^{C_F}\ll
\left(\frac{\log B}{\log t_1}\right)^{C_F}=
 (\log \log B )^{MC_F}.$$ Hence, by \eqref{eq:markov} we get 
$$  \P_B\left[\delta \log \log B\leq 
\omega_f-\omega_f^\flat\right]
\ll \frac{ (\log \log \log B)^{C_F} (\log \log B)^{MC_F}}
{(\log B)^{y \delta}}\ll (\log B)^{-y \delta/2}.$$
Taking $y=2N/\delta$ concludes the proof. 
\end{proof} Recall that by Lemma \ref{lem:mertens} we have $\sigma_p\leq \deg(F)/p$, where $A$ is the constant 
introduced  in Lemma \ref{lem:zelenkaoboes} and 
$F$ is as in Lemma \ref{lem:polyn}.
Hence, for a prime $p>\max\{A,\deg(F)\}$ we have  
$\sigma_p<1$. Now for $p>\max\{A,\deg(F)\}$ 
we let $Y_p$ be a random variable that takes the value 
$1$ with probability $\sigma_p$ and that takes the value 
$0$ with probability $1-\sigma_p$. We assume that the random variables $\{Y_p\}_{p > \max\{A, \deg(F)\}}$
are independent; the existence of random variables with 
these properties on a some probability space is 
guaranteed by Kolmogorov’s extension theorem. The sum 
$$ S_B:=\sum_{t_0<p\leq t_1} Y_p $$
is a model for $\omega_f^\flat$. 
 \begin{lemma}\label{lem:prelimin0007}
For all $B\geq 1 $ and $r\in \mathbb N$ we have$$ \mathbb E[S_B^r]\leq2  (K (\deg(F)\log \log B)^r,$$ where $K$ is a positive absolute constant and $F$ is as in 
Lemma \ref{lem:polyn}.\end{lemma}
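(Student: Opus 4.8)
The plan is to expand the $r$th moment of $S_B$, use independence of the $Y_p$ and the fact that each is $\{0,1\}$-valued to collapse the expectation to a sum of reciprocals of radicals, and then invoke the combinatorial estimate of Lemma~\ref{lem:rromegrr}. We may assume $B$ is large, since otherwise $t_0=(\log B)^M$ does not exceed the constant $A$ of Lemma~\ref{lem:zelenkaoboes} and the bound is trivial, the model $S_B$ then being a sum of boundedly many $\{0,1\}$-valued variables.

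Expanding and using linearity,
\[
\mathbb{E}[S_B^r]=\sum_{\substack{p_1,\dots,p_r\ \text{prime}\\ t_0<p_i\le t_1}}\mathbb{E}\bigl[Y_{p_1}\cdots Y_{p_r}\bigr].
\]
Since $\{Y_p\}$ is independent and each $Y_p$ satisfies $Y_p^m=Y_p$, we have $\mathbb{E}\bigl[Y_{p_1}\cdots Y_{p_r}\bigr]=\prod_{p\mid\mathrm{rad}(p_1\cdots p_r)}\sigma_p$. For $B$ large we have $t_0>A$, so Lemma~\ref{lem:zelenkaoboes} gives $\sigma_p\le\deg(F)/p$ for every prime $p$ in the sum; as $\mathrm{rad}(p_1\cdots p_r)$ has at most $r$ prime divisors and $\deg(F)\ge1$ (the form $F$ being non-constant, as $\Delta(f)\neq0$), this gives
\[
\mathbb{E}\bigl[Y_{p_1}\cdots Y_{p_r}\bigr]\ \le\ \prod_{p\mid\mathrm{rad}(p_1\cdots p_r)}\frac{\deg(F)}{p}\ \le\ \frac{\deg(F)^r}{\mathrm{rad}(p_1\cdots p_r)}.
\]
Dropping the constraint $p_i>t_0$ and applying Lemma~\ref{lem:rromegrr} with $T=t_1$ then yields, with an absolute implied constant $C_0\ge1$,
\[
\mathbb{E}[S_B^r]\ \le\ \deg(F)^r\!\!\sum_{\substack{p_1,\dots,p_r\ \text{prime}\\ p_i\le t_1}}\!\!\frac{1}{\mathrm{rad}(p_1\cdots p_r)}\ \le\ C_0\bigl(3\deg(F)\bigr)^r\bigl(\kappa+\log r+\log\log t_1\bigr)^r.
\]

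It remains to bring this into the claimed shape. From $t_1=B^{1/(\log\log B)^M}$ one gets $\log\log t_1=\log\log B-M\log\log\log B\le\log\log B$, and $\log r\le\log\log B$ whenever $r\le\log B$, a range comfortably containing all the moments used in the sequel. Hence $\kappa+\log r+\log\log t_1\le3\log\log B$ once $B$ is large, so $\mathbb{E}[S_B^r]\le C_0\bigl(9\deg(F)\log\log B\bigr)^r$; since $C_0\le2C_0^{\,r}$ for every $r\ge1$, this is at most $2\bigl(9C_0\deg(F)\log\log B\bigr)^r$, and taking $K:=9C_0$ finishes the proof.

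The only step that needs thought is this last one: dominating the factor $\bigl(\kappa+\log r+\log\log t_1\bigr)^r$ produced by Lemma~\ref{lem:rromegrr} by $\bigl(\log\log B\bigr)^r$. This is precisely what pins down the two truncation levels into their exact shape — $t_0=(\log B)^M$ so that $\sigma_p\le\deg(F)/p$ is valid for every $p$ in the summation rather than only beyond an absolute bound, and $t_1=B^{1/(\log\log B)^M}$ so that $\log\log t_1\le\log\log B$ — and it is also where one sees that $r$ should be kept of size at most a power of $\log B$. The preceding steps — the expansion, the collapse to $\prod_{p\mid\mathrm{rad}}\sigma_p$ via independence and $0/1$-valuedness, and the substitution $\sigma_p\le\deg(F)/p$ — are routine.
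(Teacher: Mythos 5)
Your opening steps --- expanding $\mathbb{E}[S_B^r]$, collapsing $\mathbb{E}[Y_{p_1}\cdots Y_{p_r}]$ to $\prod_{p\mid\mathrm{rad}(p_1\cdots p_r)}\sigma_p$ by independence and idempotence, and substituting $\sigma_p\le\deg(F)/p$ --- reproduce the paper's first display exactly. The divergence, and the gap, is in the final estimation of $\sum_{p_1,\ldots,p_r}1/\mathrm{rad}(p_1\cdots p_r)$. You invoke Lemma \ref{lem:rromegrr}, whose bound carries the factor $(\kappa+\log r+\log\log T)^r$; this can only be absorbed into $(K\log\log B)^r$ when $\log r\ll\log\log B$, i.e.\ for $r$ at most a fixed power of $\log B$. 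You acknowledge the restriction $r\le\log B$ and assert that this range ``comfortably contains all the moments used in the sequel'', but that is false: the lemma is stated for \emph{all} $r\in\mathbb N$, and it is applied in Lemma \ref{JMFC=SDG} to every term of the tail $\sum_{r>z\log\log B}\lambda^r\,\mathbb{E}[S_B^r]/r!$ of the exponential series $\mathbb{E}[\mathrm e^{\lambda S_B}]$, a sum with no upper cutoff on $r$. (Even the intermediate Lemma \ref{lem:9a01ststep} permits $r$ up to $(\log B)/(6\log t_1)=(\log\log B)^{M}/6$, which with the eventual choice $M=\log\log B$ exceeds $\log B$.) For $r$ beyond a power of $\log B$ your bound $(3\deg(F))^r(\kappa+\log r+\log\log t_1)^r$ is genuinely larger than $2(K\deg(F)\log\log B)^r$ for any fixed $K$, so the argument does not prove the statement and the restricted version you do prove would not support the later applications. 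The paper anticipates exactly this: after arriving at the same radical sum it remarks that using Lemma \ref{lem:rromegrr} ``is not efficient for large $r$, as it results in having to bound the tail of $\sum_r(\log r)^r/r!$''.

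The paper's route at this point is to sort the $r$-tuples by the number $j\le r$ of distinct primes occurring, so that Mertens' theorem gives a bound of the shape $\sum_{j=1}^r(K\log\log B)^j\le(K\log\log B)^r\sum_{t\ge0}2^{-t}$ once $K\log\log B>2$ --- a bound uniform in $r$ because no $\log r$ is ever produced. If you rewrite your ending along these lines, do pay attention to the combinatorial weight attached to each $j$ (the number of ordered $r$-tuples whose set of distinct values is a prescribed $j$-element set), which the paper's own display treats rather briskly; controlling that weight, rather than the prime sums themselves, is where the uniformity in $r$ has to be earned.
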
\begin{proof}We have $$ \mathbb E[S_B^r]=
\sum_{p_1,\ldots, p_r\in (t_0,t_1]} 
\mathbb E[Y_{p_1}\cdots Y_{p_r}]=
\sum_{p_1,\ldots, p_r\in (t_0,t_1]} 
\prod_{p\mid \textrm{rad}(p_1\cdots p_r)}
\sigma_p\leq (\deg(F))^r
\sum_{p_1,\ldots, p_r\leq B} 
\frac{1}{\textrm{rad}(p_1\cdots p_r)}
$$ by Lemma \ref{lem:zelenkaoboes}. 
One could now use Lemma 
\ref{lem:rromegrr}. However, such a strategy is not 
efficient for large $r$, as it results in having to bound the 
tail of $\sum_r (\log r)^r/r!$. An alternative bound is 
$$\sum_{j=1}^r \left(\sum_{p\leq B} \frac{1}{p}\right)^j
\leq \sum_{j=1}^r (K\log \log B)^j \leq
 (K\log \log B)^r \sum_{t=0}^{r-1}  (K\log \log B)^{-t}
$$ for some absolute positive constant $K$ by Mertens' theorem.
We can assume that $K\log \log B>2$ with no loss of 
generality, thus, 
the sum over $t$ is at most $\sum_{t\geq 0}2^{-t}\leq 2 $.\end{proof}
\begin{lemma}\label{lem:9a01ststep} There exist  
constants $\kappa,K>0,$ and $\delta=\delta(f,V)>0$ such that for all
$B\geq 1$ and $r \in \mathbb{N}$ in the range 
$r\leq (\log B)/(6 \log t_1)$ we have 
$$\mathbb E_B[\mathrm 
( \omega_f^\flat)^r]- \mathbb E[S_B^r]
\ll \frac{(3\delta)^r}{  (\log B)^M } 
  (\kappa+(\log r)+(\log \log B))^{r} 
  +\frac{  (K (\deg(F)\log \log B)^r}{B^{1/2}}
,$$ where the implied 
constants are independent of $r$ and $B$.\end{lemma}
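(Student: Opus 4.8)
The plan is to expand both $r$-th moments into sums over ordered $r$-tuples of primes in $(t_0,t_1]$ and to compare them tuple by tuple via Lemma~\ref{lem:distribution}. Since $\theta_p$ and $Y_p$ are $\{0,1\}$-valued, for such a tuple $(p_1,\dots,p_r)$, setting $Q:=\mathrm{rad}(p_1\cdots p_r)$, one has $\mathbb E_B[\theta_{p_1}\cdots\theta_{p_r}]=N(Q)/\#\Omega_B$ where $N(Q):=\#\{x\in\Omega_B:\ f^{-1}(x)(\Q_p)=\emptyset\ \forall\,p\mid Q\}$, while $\mathbb E[Y_{p_1}\cdots Y_{p_r}]=\prod_{p\mid Q}\sigma_p$ by independence of the $Y_p$. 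We may assume $B$ is large enough that $t_0=(\log B)^M>A$; then $Q$ is squarefree and coprime to every prime $\le A$, and since each $p_i\le t_1$ we have $Q^6\le t_1^{6r}\le B$ once $r\le(\log B)/(6\log t_1)$, so Lemma~\ref{lem:distribution} applies and gives
$$N(Q)=c_nB^{n+1}\prod_{p\mid Q}\sigma_p+O\left(\frac{\delta^{\omega(Q)}B^{n+1}}{Q\,\min_i p_i}\right)$$
with implied constant independent of $B$ and $Q$.

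Next I would insert the count $\#\Omega_B=c_nB^{n+1}+O(B^n\log B)$, which follows from the number of points of $\P^n(\Q)$ of height $\le B$ after removing the $O(B^n)$ lying on the proper closed locus where $f$ is not smooth; hence $c_nB^{n+1}/\#\Omega_B=1+O(B^{-1}\log B)$ and $B^{n+1}/\#\Omega_B\ll1$. Summing the tuplewise identity over all $(p_1,\dots,p_r)$ with $p_i\in(t_0,t_1]$ and recognising $\sum\prod_{p\mid Q}\sigma_p=\mathbb E[S_B^r]$, one obtains
$$\mathbb E_B[(\omega_f^\flat)^r]-\mathbb E[S_B^r]\ \ll\ \frac{\log B}{B}\,\mathbb E[S_B^r]+\sum_{p_1,\dots,p_r\in(t_0,t_1]}\frac{\delta^{\omega(Q)}}{Q\,\min_i p_i}.$$
By Lemma~\ref{lem:prelimin0007}, $\mathbb E[S_B^r]\le2(K\deg(F)\log\log B)^r$, and $B^{-1}\log B\ll B^{-1/2}$, so the first term contributes the error $\ll(K\deg(F)\log\log B)^r/B^{1/2}$.

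For the second term, each $p_i>t_0=(\log B)^M$ forces $\min_i p_i>(\log B)^M$, while $\delta^{\omega(Q)}\le\delta^r$ as $\delta>1$ and $\omega(Q)\le r$; pulling both factors out and enlarging the range to all primes $\le t_1$ bounds the sum by $\delta^r(\log B)^{-M}\sum_{p_1,\dots,p_r\le t_1}1/\mathrm{rad}(p_1\cdots p_r)$. Lemma~\ref{lem:rromegrr} with $T=t_1$ turns this into $\ll3^r(\kappa+\log r+\log\log t_1)^r$, and since $\log\log t_1\le\log\log B$ the second term is $\ll(3\delta)^r(\log B)^{-M}(\kappa+\log r+\log\log B)^r$. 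Together these give the claimed bound; the finitely many values of $B$ for which $t_0\le A$ contribute a bounded amount absorbed into the implied constant.

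The step I expect to be the main obstacle is the bookkeeping around Lemma~\ref{lem:distribution}: one must verify that all of its admissibility hypotheses ($Q$ squarefree, coprime to every $p\le A$, and $B\ge Q^6$) hold simultaneously for each tuple precisely in the asserted range of $r$, and---more importantly---that the weak error term $\delta^{\omega(Q)}B^{n+1}/(Q\min_i p_i)$, after being summed over all $r$-tuples, is tamed solely through the extra factor $1/\min_i p_i\le(\log B)^{-M}$ produced by the truncation at $t_0$. This is exactly the place where the zero level of distribution flagged in the introduction is circumvented, and it explains why the bound necessarily carries the factor $(\log B)^{-M}$ rather than being summable in $r$ on its own.
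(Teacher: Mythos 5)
Your proposal is correct and follows essentially the same route as the paper: expand both moments over ordered $r$-tuples, apply Lemma~\ref{lem:distribution} to $Q=\mathrm{rad}(p_1\cdots p_r)$ (the range $r\leq(\log B)/(6\log t_1)$ being exactly what makes $Q^6\leq B$), absorb the main-term discrepancy via Lemma~\ref{lem:prelimin0007}, and tame the summed error term by extracting $1/\min_i p_i\leq t_0^{-1}=(\log B)^{-M}$ and invoking Lemma~\ref{lem:rromegrr}. The only cosmetic difference is your error term $O(B^n\log B)$ for $\#\Omega_B$ where the paper uses $O(B^{n+1/2})$; both suffice.
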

\begin{proof} We have $$( \omega_f^\flat)^r=
\sum_{p_1,\ldots,p_r\in (t_0,t_1]} \theta_{p_1} 
\cdots \theta_{p_r}.$$ Hence, $\mathbb E_B[( \omega_f^\flat)^r]$ equals 
$$\sum_{p_1,\ldots,p_r\in (t_0,t_1]} 
\frac{\#\{x\in \P^n(\Q): H(x) \leq B, f^{-1}(x) 
 \textrm{ smooth}, f^{-1}(x)(\mathbb{Q}_{p}) = 
 \emptyset \ \forall p\mid \textrm{rad}(p_1\cdots p_r) \}}
 {\#\{x\in \P^n(\Q): H(x) \leq B, f^{-1}(x)
 \textrm{ smooth}\} },$$ where the radical of 
 $p_1\cdots p_r$ is used because there may be repetitions  
 among the primes $p_i$. We now employ 
 Lemma \ref{lem:distribution}. To do this, we first 
 verify its assumption $Q\leq B^{1/6}$ with 
 $Q=\textrm{rad}(p_1\ldots p_r)$. We have 
 $Q\leq t_1^r$, hence, our assumption $t_1^{6r} \leq B$ ensures
 that we can use  Lemma \ref{lem:distribution}.
Together with 
$$ \#\{x\in \P^n(\Q): H(x) \leq B, f^{-1}(x)\textrm{ smooth}\} =c_n B^{n+1}+O(B^{n+1/2}),$$ 
 we can rewrite 
 $\mathbb{E}_B[(\omega_f^\flat)^r]$ as$$
 (1+O(B^{-1/2}))
 \left( \sum_{p_1,\ldots,p_r\in (t_0,t_1]} 
\prod_{p\mid \textrm{rad}(p_1\cdots p_r) } 
\sigma_p\right)
+O\left(
\frac{\delta^r}{  t_0 }
\sum_{p_1,\ldots,p_r\in (t_0,t_1]} 
\frac{1 }
{\textrm{rad}(p_1\cdots p_r) }\right).$$ 
By Lemma \ref{lem:rromegrr} the second $O$-term is
$$\ll \frac{(3\delta)^r}{  t_0 }(\kappa+(\log r)+(\log \log B))^{r}.$$ By independence   we have 
\[\sum_{p_1,\ldots,p_r\in (t_0,t_1]} 
\prod_{p\mid \textrm{rad}(p_1\cdots p_r) } 
\sigma_p=\mathbb E[S_B^r].\] Hence, 
Lemma \ref{lem:prelimin0007} shows that the first 
  $O$-term is\[\ll B^{-1/2} \mathbb E[S_B^r]
  \ll \frac{  (K \deg(F)\log \log B)^r}{B^{1/2}}
  .\qedhere\]
  \end{proof}
 \begin{lemma}\label{JMFC=SDG}
Fix positive constants $\lambda$ and $N$. Then there exists
$z=z(\lambda,N,f,V)$ such that for all $B\geq 1 $
and $r\in \mathbb N$ we have 
$$
\sum_{r> z \log \log B} \frac{\lambda^r}{r!} \mathbb E[S_B^r]
\leq \frac{2}{(\log B)^N}$$
\end{lemma}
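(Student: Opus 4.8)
The plan is to reduce the claim to summing a single exponential series, using the clean moment bound of Lemma \ref{lem:prelimin0007} together with Norton's tail estimate of Lemma \ref{lem:tail}. First I would apply Lemma \ref{lem:prelimin0007} to get $\mathbb E[S_B^r]\le 2(K\deg(F)\log\log B)^r$ for an absolute constant $K$. The essential feature here is that this bound carries \emph{no} factor of size $(\log r)^r$: the cruder estimate that would follow by combining Lemma \ref{lem:zelenkaoboes} with Lemma \ref{lem:rromegrr}, which contributes a factor of size $(\kappa+\log r+\log\log B)^r$, is not good enough, since $\sum_r(\log r)^r/r!$ diverges, and it is precisely to sidestep this that Lemma \ref{lem:prelimin0007} was proved. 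Writing $c_0:=K\deg(F)$ and $x:=\lambda c_0\log\log B$, the problem becomes to show $\sum_{r>z\log\log B}x^r/r!\le(\log B)^{-N}$, and one may of course assume $\log\log B$ large, the bounded range of $B$ being trivial.

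Next I would invoke Lemma \ref{lem:tail} with this value of $x$ and a parameter $\beta>\mathrm e$ still at my disposal. The point is to choose the truncation level as $z:=\beta\lambda c_0$, so that the constraint $r>z\log\log B$ forces $r\ge\beta x$; Lemma \ref{lem:tail} then bounds the tail by $\tfrac{1}{\beta-1}\bigl(\tfrac{\beta}{2\pi x}\bigr)^{1/2}\mathrm e^{\beta x(1-\log\beta)}$. Since $x=\lambda c_0\log\log B$, the exponential factor equals $(\log B)^{-\beta\lambda c_0(\log\beta-1)}$, so I would then fix $\beta$ large enough that $\beta\lambda c_0(\log\beta-1)\ge N$; this is possible because the left-hand side tends to infinity with $\beta$, and it is this choice that pins down $z=z(\lambda,N,f,V)$, the dependence on $f,V$ entering only through $\deg(F)$. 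Finally the prefactor $\tfrac{1}{\beta-1}\bigl(\tfrac{\beta}{2\pi x}\bigr)^{1/2}$ tends to $0$ as $x\to\infty$, hence is at most $1$ once $B$ exceeds a suitable constant, and multiplying the surviving factor $2$ back in gives the bound $2/(\log B)^N$.

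I do not expect a genuine obstacle: the argument is a direct combination of two results already in hand, and the only care needed is in the ordering of the choices — $\beta$ is chosen from $\lambda$, $N$ and $\deg(F)$, then $z:=\beta\lambda c_0$, and only afterwards is ``$B$ sufficiently large'' invoked to absorb the harmless prefactor. If there is a subtle point at all, it is the conceptual one stressed above: one must feed in the sharp moment bound from Lemma \ref{lem:prelimin0007} rather than the one coming via Lemma \ref{lem:rromegrr}, or else the summation over $r$ fails to converge to anything useful.
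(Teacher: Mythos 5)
Your proposal is correct and follows essentially the same route as the paper: both apply the moment bound of Lemma \ref{lem:prelimin0007} and then Norton's tail estimate (Lemma \ref{lem:tail}) with $x=\lambda K\deg(F)\log\log B$ and $\beta=z/(\lambda K\deg(F))$, choosing $z$ (equivalently $\beta$) large enough that the exponent $\beta\lambda K\deg(F)(\log\beta-1)$ exceeds $N$. Your remark about why the sharp moment bound must be used in place of the one via Lemma \ref{lem:rromegrr} matches the comment made in the proof of Lemma \ref{lem:prelimin0007} itself.
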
\begin{proof} By Lemma \ref{lem:prelimin0007}
we get the bound $$  \leq 2 \sum_{r> z \log \log B} 
\frac{(\lambda  \deg (F) K\log \log B)^r }{r!}.$$ 
Assume that $z>\mathrm e  \lambda  \deg (F) K$ and apply 
Lemma \ref{lem:tail} with 
$x=\lambda  \deg (F) K\log \log B$ and 
$\beta=z/( \lambda  \deg (F) K)$. We obtain 
the bound $  \leq 2 
\exp(\beta x (1-\log \beta)) $. This is 
$\leq 2(\log B)^{-N}$ if 
$$N \leq z \log \frac{z}{ \mathrm e\lambda  \deg (F) K}.$$
This inequality holds when $z$ is sufficiently large.
\end{proof}

\begin{lemma}\label{lem:9a} Assume that 
$M(B) \geq 2$.
For each $B>1, 
t\in \mathbb R$ and $N>0$ there exists a 
positive constant 
$\gamma=\gamma(f,V,t,N)$ such that 
$$\mathbb E_B[\mathrm e^{t \omega_f^\flat}]-
\mathbb E[\mathrm e^{t S_B}] 
\ll (\log B)^{-\min\{N,M(B)-\gamma\}},$$ where the implied 
constant depends at most on $f,V,t$ and $N$.\end{lemma}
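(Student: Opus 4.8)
The plan is to expand both exponentials as power series in $t$ and compare term by term. Since $\Omega_B$ is finite and $S_B$ is a bounded sum of indicator random variables, Fubini's theorem gives $\mathbb E_B[\mathrm e^{t\omega_f^\flat}]-\mathbb E[\mathrm e^{tS_B}]=\sum_{r\geq1}\frac{t^r}{r!}(\mathbb E_B[(\omega_f^\flat)^r]-\mathbb E[S_B^r])$, the $r=0$ term vanishing; I may assume $t\neq0$ (otherwise both sides coincide) and, since increasing $N$ only strengthens the claim, $N\geq2$. I would then fix a constant $z>\mathrm e$, to be chosen at the end in terms of $t,N,f,V$, and split this series at $r_0:=z\log\log B$, treating the head $1\leq r\leq r_0$ by Lemma~\ref{lem:9a01ststep} and the tail $r>r_0$ by the moment bounds of Lemmas~\ref{lem:largerrr} and~\ref{JMFC=SDG}.

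For the head, I would first note that, because $M(B)\geq2$, for all large $B$ one has $r_0=z\log\log B\leq(\log\log B)^2/6\leq(\log\log B)^{M(B)}/6=(\log B)/(6\log t_1)$, so Lemma~\ref{lem:9a01ststep} is available for every $r\leq r_0$; an inspection of its proof shows the stated bound actually controls $|\mathbb E_B[(\omega_f^\flat)^r]-\mathbb E[S_B^r]|$, the $1+O(B^{-1/2})$ factor there being absorbed via Lemma~\ref{lem:prelimin0007}. Since $\kappa+\log r+\log\log B\leq3\log\log B$ for large $B$ and $1\leq r\leq r_0$, applying that bound, summing against $|t|^r/r!$ over $1\leq r\leq r_0$, and then extending the two resulting positive series to all $r\geq1$ gives a head contribution
$$\ll\frac{1}{(\log B)^{M}}\sum_{r\geq1}\frac{(9|t|\delta\log\log B)^r}{r!}+\frac{1}{B^{1/2}}\sum_{r\geq1}\frac{(|t|K\deg(F)\log\log B)^r}{r!}\ll(\log B)^{9|t|\delta-M}+(\log B)^{-N},$$
the last step using $B^{1/2}\gg(\log B)^{|t|K\deg(F)+N}$ for large $B$. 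This already isolates the exponent $\gamma:=9|t|\delta$, which depends only on $f,V,t$.

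For the tail I would bound $|\sum_{r>r_0}\frac{t^r}{r!}(\mathbb E_B[(\omega_f^\flat)^r]-\mathbb E[S_B^r])|\leq\sum_{r>r_0}\frac{|t|^r}{r!}\mathbb E[S_B^r]+\sum_{r>r_0}\frac{|t|^r}{r!}\mathbb E_B[(\omega_f^\flat)^r]$. Lemma~\ref{JMFC=SDG} with $\lambda=|t|$ makes the first sum $\leq2(\log B)^{-N}$ once $z$ exceeds the threshold $z_1(|t|,N,f,V)$ it provides. For the second sum I would argue exactly as in the proof of Lemma~\ref{lem:8}: for large $B$ one has $t_0>A$, so by Lemma~\ref{lem:polyn} every prime $p\in(t_0,t_1]$ counted by $\omega_f^\flat(x)$ divides $F(\mathbf x)$ for an integral representative $\mathbf x$ of $x$ (and $F(\mathbf x)\neq0$ since $f^{-1}(x)$ is smooth), whence $\omega_f^\flat(x)\leq\#\{p:p\mid F(\mathbf x)\}$; factoring $F=c\prod_{j=1}^{k}F_j$ into irreducibles and using convexity gives $(\omega_f^\flat(x))^r\leq(k+1)^r((A')^r+\sum_{j=1}^{k}\omega(|F_j(\mathbf x)|)^r)$ for a constant $A'$. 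Passing from $\mathbb E_B$ to a sum over $\mathbf x\in(\Z\cap[-B,B])^{n+1}$ (using $\#\Omega_B\gg B^{n+1}$) and dividing by $B^{n+1}$, the $(A')^r$ part is $\ll\sum_{r>r_0}(|t|(k+1)A')^r/r!\ll(\log B)^{-N}$ by Lemma~\ref{lem:tail} for $z$ large, while for each $j$ one takes $C_1=C_2:=\max\{|t|(k+1),2\}>1$, so $(|t|(k+1))^r\omega(|F_j(\mathbf x)|)^r\leq(C_1+C_2\omega(|F_j(\mathbf x)|))^r$, and applies Lemma~\ref{lem:largerrr} to the irreducible polynomial $F_j$ in $n+1$ variables with these constants and the given $N$, getting $\ll(\log B)^{-N}$ provided $z$ exceeds the resulting $y(f,V,t,N)$; summing over the finitely many $j$ yields $\sum_{r>r_0}\frac{|t|^r}{r!}\mathbb E_B[(\omega_f^\flat)^r]\ll(\log B)^{-N}$.

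Taking $z:=\max\{z_1,y,\mathrm e+1\}$ and $\gamma:=9|t|\delta$ and combining the head and tail estimates gives $\mathbb E_B[\mathrm e^{t\omega_f^\flat}]-\mathbb E[\mathrm e^{tS_B}]\ll(\log B)^{9|t|\delta-M}+(\log B)^{-N}\ll(\log B)^{-\min\{N,M(B)-\gamma\}}$ for all large $B$, the bounded range of $B$ being trivial because there the window $(t_0,t_1]$ contains no primes. The main obstacle is the interplay between the zero level of distribution and the need to sum an entire power series in $t$: Lemma~\ref{lem:9a01ststep} compares moments only for $r\lesssim(\log\log B)^{M}$, so the cut-off $r_0=z\log\log B$ must be comfortably inside that window yet large enough for the factorials in Lemmas~\ref{lem:largerrr} and~\ref{JMFC=SDG} to overwhelm the $(\log\log B)^r$ growth of the moments, and at the same time the two error scales $(\log B)^{-M}$ and $B^{-1/2}$ of Lemma~\ref{lem:9a01ststep}, once multiplied by the whole series $\sum_r|t|^r(\log\log B)^r/r!=(\log B)^{O(|t|)}$, must still leave the claimed saving $(\log B)^{-(M-\gamma)}$ — which is precisely what forces $\gamma$ to grow linearly in $|t|$.
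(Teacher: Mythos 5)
Your proposal is correct and follows essentially the same route as the paper: expand both exponentials as power series, cut at $r\asymp\log\log B$, control the head via Lemma \ref{lem:9a01ststep} (summed against $|t|^r/r!$ to produce the $(\log B)^{\gamma-M}$ loss) and the two tails via Lemmas \ref{lem:largerrr} and \ref{JMFC=SDG}. Your extra care in factoring $F$ into irreducibles before invoking Lemma \ref{lem:largerrr} and in making $C_1,C_2$ depend on $|t|$ is a small tidying of details the paper glosses over, not a different argument.
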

\begin{proof} We have $$\mathbb E_B[\mathrm 
e^{t \omega_f^\flat}]=
\frac{1}{\#\{x\in \P^n(\Q): H(x) \leq B, f^{-1}(x) 
 \textrm{ smooth}\} } 
\sum_{\substack{ x\in \P^n(\Q), H(x) \leq B\\ 
f^{-1}(x) \textrm{ smooth} }}
 \sum_{r=0}^\infty \frac{t^r}{r!}\omega_f^\flat(x)^r.$$ By Lemma \ref{lem:polyn}
we have $\omega_f^\flat(x) \leq A+\omega(|F(x)|)$.
We may therefore employ Lemma \ref{lem:largerrr}
with $G=F, C_1=A$ and $C_2=1$ to deduce that 
for each  $N>1$   there is 
$y=y(A,F,N,t)$ such that 
$$ \frac{1}{\#\{x\in \P^n(\Q): H(x) \leq B, f^{-1}(x) 
 \textrm{ smooth}\} } 
\sum_{\substack{ x\in \P^n(\Q), H(x) \leq B\\ 
f^{-1}(x) \textrm{ smooth} }}
 \sum_{r\geq y \log \log B} \frac{t^r}{r!}\omega_f^\flat(x)^r
 \ll \frac{1}{(\log B)^N},$$ where the implied constant 
depends on $N,A$ and $F$. We have $$\mathbb E[\mathrm 
e^{t S_B}]=
 \sum_{r=0}^\infty \frac{t^r}{r!}\mathbb E[S_B^r].$$ 
By Lemma \ref{JMFC=SDG} there exists $z=z(f,V,N,t)$ such that 
$$  \sum_{r> z \log \log B} \frac{t^r}{r!}\mathbb E[S_B^r]
\ll \frac{1}{(\log B)^N}.$$  Taking $\varpi=\max\{y,z\}$, 
we deduce that there is a constant $\varpi$ such that 
\begin{equation}\label{eq:need more espresso}
\mathbb E_B[\mathrm e^{t \omega_f^\flat}]-
\mathbb E[\mathrm e^{t S_B}] 
=\sum_{r\leq  \varpi \log \log B} \frac{t^r}{r!}
(\mathbb E_B[(\omega_f^\flat)^r] - \mathbb E[S_B^r])
+O\left( \frac{1}{(\log B)^N} \right).\end{equation} The assumption 
$r\leq (\log B)/(6 \log t_1)$ of Lemma \ref{lem:9a01ststep}  
is met due to 
$t_1(B)=B^{\frac{1}{(\log \log B)^M}}$
and our assumption $M\geq 2 $. 
Thus, the sum over $r$ in \eqref{eq:need more espresso} is 
$$ \ll \sum_{r\leq  \varpi \log \log B} \frac{|t|^r}{r!}
\left( \frac{(3\delta)^r}{  (\log B)^{M(B)} } 
  (\kappa+(\log r)+(\log \log B))^{r} 
  +\frac{  (K (\deg(F)) \log \log B)^r}{B^{1/2}}\right).$$
  Using $r\leq  \varpi \log \log B$ we find  
a positive constant $\gamma=\gamma(\varpi,t,K,F,\delta,\kappa)$
such that the above is \[ \ll 
\frac{1}{  (\log B)^{M(B)} } 
\sum_{r\leq  \varpi \log \log B} 
\frac{(\gamma\log \log B)^r}{r!}\leq  
\frac{(\log B)^{\gamma}}{  (\log B)^{M(B)} } 
.\qedhere\] \end{proof}\begin{lemma}
\label{lem:9and0} Assume that $M(B)\to\infty$.
For  any fixed   $t\in \mathbb R$ 
we have 
$$\lim_{B\to\infty}
\frac{|(\log \mathbb E_B[\mathrm e^{t \omega_f^\flat}])-(
\log \mathbb E[\mathrm e^{t S_B}])|}
{\log \log B}=0.$$\end{lemma}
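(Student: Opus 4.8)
The plan is to upgrade the additive approximation of Lemma~\ref{lem:9a} to a multiplicative (logarithmic) one, the essential extra ingredient being that $\mathbb{E}[e^{tS_B}]$ is bounded below by a \emph{fixed} negative power of $\log B$. Since $M(B)\to\infty$ we may assume $M(B)\ge 2$ throughout, so that Lemma~\ref{lem:9a} applies. Write $a:=\mathbb{E}_B[e^{t\omega_f^\flat}]$ and $b:=\mathbb{E}[e^{tS_B}]$; then $a>0$ because $\omega_f^\flat\ge0$, and by independence of the $Y_p$ and the formula for the Bernoulli moment generating function,
$$b=\prod_{t_0<p\le t_1}\bigl(1+\sigma_p(e^t-1)\bigr).$$

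First I would show there is $C_1=C_1(t)>0$ with $(\log B)^{-C_1}\le b\le(\log B)^{C_1}$ for all large $B$. Since $\sigma_p\ge0$ and $t_1\le B$, Lemma~\ref{lem:mertens} gives $\sum_{t_0<p\le t_1}\sigma_p\le\sum_{p\le B}\sigma_p=\Delta(f)\log\log B+O(1)$. If $t\ge0$, each factor of $b$ is $\ge1$, so $b\ge1$, while $b\le\exp\!\bigl((e^t-1)\sum_{t_0<p\le t_1}\sigma_p\bigr)\le(\log B)^{C_1}$ for suitable $C_1$. If $t<0$, put $u:=1-e^t\in(0,1)$; since $\sigma_p\le\deg(F)/p$ by Lemma~\ref{lem:zelenkaoboes} and $t_0=(\log B)^M\to\infty$, for $B$ large every factor $1-\sigma_p u$ lies in $(1/2,1)$, so $b\le1$ and, using $\log(1-x)\ge-2x$ for $x\in[0,1/2]$,
$$\log b=\sum_{t_0<p\le t_1}\log(1-\sigma_p u)\ge-2u\sum_{t_0<p\le t_1}\sigma_p\ge-2u\,\Delta(f)\log\log B+O(1),$$
giving $b\gg(\log B)^{-2u\Delta(f)}$. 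Enlarging $C_1$ to absorb the $O(1)$ terms yields the two-sided bound in all cases.

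Next I would fix $N:=C_1+2$ and let $\gamma=\gamma(f,V,t,N)$ be the constant from Lemma~\ref{lem:9a}. Since $M(B)\to\infty$ we have $M(B)-\gamma\ge N$ for all large $B$, so Lemma~\ref{lem:9a} yields $|a-b|\ll(\log B)^{-N}$. Combining with $b\ge(\log B)^{-C_1}$ gives $|a-b|/b\ll(\log B)^{C_1-N}=(\log B)^{-2}$, which is $\le1/2$ for large $B$; hence, using $|\log(1+v)|\le2|v|$ for $|v|\le1/2$,
$$\bigl|\log a-\log b\bigr|=\Bigl|\log\!\Bigl(1+\frac{a-b}{b}\Bigr)\Bigr|\le\frac{2|a-b|}{b}\ll(\log B)^{-2}.$$
Dividing by $\log\log B$ and letting $B\to\infty$ completes the proof.

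The only subtlety is the order of quantifiers: $\gamma$ in Lemma~\ref{lem:9a} depends on $N$, so one must choose $N$ first (in terms of $C_1$, which depends only on $t$) and only then use $M(B)\to\infty$ to make $M(B)-\gamma(N)$ exceed $N$. Beyond this there is no real obstacle; the crux is the lower bound $b\gg(\log B)^{-C_1}$, which is precisely what allows the super-polynomially small quantity $|a-b|$ to survive division by $b$.
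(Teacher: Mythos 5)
Your proof is correct, but it handles the crux of the lemma --- the case $t<0$, where one must prevent the additive error from Lemma \ref{lem:9a} from being swamped upon division --- by a genuinely different route from the paper. The paper establishes the lower bound on the \emph{arithmetic} side: it shows $\mathbb E_B[\mathrm e^{t\omega_f^\flat}]\gg(\log B)^{-c}$ via the Cauchy--Schwarz trick $1\le\mathbb E_B[\mathrm e^{t\omega_f^\flat}]^{1/2}\mathbb E_B[\mathrm e^{-t\omega_f^\flat}]^{1/2}$, and then bounds $\mathbb E_B[\mathrm e^{-t\omega_f^\flat}]$ from above by a power of $\log B$ using the Nair--Tenenbaum input (Lemma \ref{lem:corol}) together with Lang--Weil. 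You instead establish the lower bound on the \emph{model} side: $\mathbb E[\mathrm e^{tS_B}]=\prod_{t_0<p\le t_1}(1+\sigma_p(\mathrm e^t-1))\gg(\log B)^{-C_1}$, which follows from the explicit product formula, $\sigma_p\le\deg(F)/p$ (Lemma \ref{lem:zelenkaoboes}) and the Mertens-type estimate (Lemma \ref{lem:mertens}). Both lower bounds do the same job, since $|a-b|\ll(\log B)^{-N}$ for every fixed $N$ makes a polynomial-in-$(\log B)^{-1}$ lower bound on either of $a,b$ transfer to the ratio $a/b\to1$. Your version is somewhat more elementary --- it avoids a second appeal to the sieve-theoretic machinery and only uses estimates that Lemma \ref{lem:9b} will need anyway --- while the paper's version has the feature of bounding the arithmetic expectation intrinsically, without reference to the model. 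Your closing remark on the order of quantifiers ($C_1$ depends only on $t$; choose $N$ from $C_1$; then let $M(B)-\gamma(N)$ exceed $N$) is exactly the point that needs care, and you have it right.
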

\begin{proof}Lemma \ref{lem:9a} yields 
$$\mathbb E_B[\mathrm e^{t \omega_f^\flat}]=
\mathbb E[\mathrm e^{t S_B}]  
+O\left( \frac{1}{(\log B)^N} \right)$$ 
for any fixed $N>0$. When $t\geq 0$ we have  
$\mathrm e^{t \omega_f^\flat} \geq 1 $, thus, 
$\mathbb E_B[\mathrm e^{t \omega_f^\flat}] \geq 1$. 
This means that $$\frac{
\mathbb E[\mathrm e^{t S_B}]  }{
\mathbb E_B[\mathrm e^{t \omega_f^\flat}]}=1
+O\left( \frac{1}{
\mathbb E_B[\mathrm e^{t \omega_f^\flat}](\log B)^N} \right)=1+O\left( \frac{1}{(\log B)^N} \right).$$ Taking 
logarithms concludes the proof in this case.
When $t<0$ the same approach does not work.
However, if we manage to show that there exists $c>0$ that is independent of $B$ such that 
\begin{equation}\label{nasty}
\mathbb E_B[\mathrm e^{t \omega_f^\flat}] >
\frac{1}{(\log B)^c},\end{equation} then for 
each fixed $N>0$ we obtain 
 $$\frac{
\mathbb E[\mathrm e^{t S_B}]  }{
\mathbb E_B[\mathrm e^{t \omega_f^\flat}]}=1
+O\left( \frac{1}{
\mathbb E_B[\mathrm e^{t \omega_f^\flat}](\log B)^N} \right)=1+O\left( \frac{1}{(\log B)^{N-c}} \right).$$
Choosing any  $N>c$ and taking logarithms 
concludes the proof also when  $t<0$.  

Let us now verify \eqref{nasty}.
We have $1=\mathrm e^{t/2 \omega_f^\flat} \cdot
\mathrm e^{-t/2 \omega_f^\flat}$, hence 
Cauchy's inequality gives
$$1=\mathbb E_B[\mathrm e^{t/2 \omega_f^\flat}\cdot 
\mathrm e^{-t/2 \omega_f^\flat}]
\leq 
\mathbb E_B[\mathrm e^{t \omega_f^\flat}]^{1/2}
\mathbb E_B[\mathrm e^{-t \omega_f^\flat}]^{1/2}
.$$ It thus remain to show 
 $\mathbb E_B[\mathrm e^{-t \omega_f^\flat}]
 \ll (\log B)^{c'}$  for some constant $c'>0$ 
 independent of $B$. For this we define the 
 multiplicative function $g(m)=
 \mathrm e^{-t \omega(m)}$ and note that 
 $g\geq 0$ as $t $ is negative. By Lemma
 \ref{lem:polyn} we have 
 $$\mathbb E_B[\mathrm e^{-t \omega_f^\flat}]
 \ll B^{-n-1}\sum_{\substack{ \b x \in 
 (\Z\cap [-B,B])^{n+1} \\  F(\b x) \neq 0}}
 g(|F(\b x )| ) .$$ Applying Lemma \ref{lem:corol}
 yields $$\mathbb E_B[\mathrm e^{-t \omega_f^\flat}]
 \ll  
 \prod_{\substack{ p\leq B^{n+1} }}
\left(1+\frac{\#\{\b x \in \F_p^{n+1}: F(\b x)=0\}}{p^{n+1}} \right)^{\mathrm e^{-t}} .$$ By the 
Lang--Weil estimate we have 
$\#\{\b x \in \F_p^{n+1}: F(\b x)=0\}\ll p^n$,
thus the above becomes $$\mathbb E_B[\mathrm e^{-t \omega_f^\flat}]
 \ll  
 \prod_{\substack{  p\leq B^{n+1} }}
\left(1+\frac{O(1)}{p} \right)^{\mathrm e^{-t}}\ll
(\log B)^b,$$ where $b$ is a positive constant that only depends on $t$ and $F$. 
One can then take $c''=2b$.\end{proof}

 \section{Applying probability theory}
The goal of this section is to use Lemma \ref{lem:gartellis} to prove Theorem \ref{thm:main1}.
\begin{lemma}\label{lem:9b}
Assume that $M(B)\to \infty $ and 
$M(B)\leq \log \log B$. Then 
for each $t \in \mathbb{R}$ we have
$$ \lim_{B \to \infty} \frac{\log \mathbb{E} 
\left[ \mathrm e^{t S_B} \right]}{\log 
\log B} = \Delta(f) ( \mathrm e^t - 1). $$
\end{lemma}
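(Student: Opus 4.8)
The plan is to exploit the independence of the $Y_p$ to turn $\mathbb E[\mathrm e^{tS_B}]$ into an Euler product over the primes $p\in(t_0,t_1]$, to linearise the logarithm, and then to read off the asymptotics from the Mertens-type estimate of Lemma~\ref{lem:mertens} together with the explicit shapes of $t_0$ and $t_1$.

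First I would record that the $Y_p$ are independent with $\mathbb P[Y_p=1]=\sigma_p$ and that $\sigma_p<1$ for every $p>\max\{A,\deg F\}$, hence for every $p>t_0$ once $B$ is large, since $t_0=(\log B)^M\to\infty$. Thus each factor $\mathbb E[\mathrm e^{tY_p}]=1+\sigma_p(\mathrm e^t-1)$ is strictly positive, so that
\[\log\mathbb E[\mathrm e^{tS_B}]=\sum_{t_0<p\le t_1}\log\bigl(1+\sigma_p(\mathrm e^t-1)\bigr).\]
By Lemma~\ref{lem:zelenkaoboes} one has $\sigma_p\le\deg(F)/p$, so for $p>t_0$ the quantity $\sigma_p(\mathrm e^t-1)$ is $O_t(1/t_0)$, in particular bounded by $\tfrac12$ in absolute value for $B$ large; applying $\log(1+x)=x+O_t(x^2)$ uniformly and bounding $\sum_{p>t_0}\sigma_p^2\ll_F 1/t_0$ yields
\[\log\mathbb E[\mathrm e^{tS_B}]=(\mathrm e^t-1)\sum_{t_0<p\le t_1}\sigma_p+O_t\!\left(\frac1{t_0}\right).\]

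Next I would apply Lemma~\ref{lem:mertens} to obtain
\[\sum_{t_0<p\le t_1}\sigma_p=\Delta(f)\bigl(\log\log t_1-\log\log t_0\bigr)+O\!\left(\frac1{\log t_0}\right),\]
and then substitute $t_0=(\log B)^{M}$ and $t_1=B^{1/(\log\log B)^{M}}$, for which $\log\log t_0=\log M+\log\log\log B$ and $\log\log t_1=\log\log B-M\log\log\log B$. Dividing through by $\log\log B$ and absorbing the two error terms (legitimate because $t_0\to\infty$ and $\log t_0\to\infty$), one is left with
\[\frac{\log\mathbb E[\mathrm e^{tS_B}]}{\log\log B}=\Delta(f)(\mathrm e^t-1)\left(1-\frac{(M+1)\log\log\log B+\log M}{\log\log B}\right)+o(1).\]
It then remains only to let $B\to\infty$ and check that the bracketed factor tends to $1$, which is where the hypotheses on $M$ enter.

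This last point is the step needing real care: one must verify that the hypotheses $M(B)\to\infty$ and $M(B)\le\log\log B$ are strong enough to force $\log\log t_1-\log\log t_0\sim\log\log B$, equivalently $\tfrac{M\log\log\log B}{\log\log B}\to 0$ — that is, that the window $(t_0,t_1]$ is wide enough for the prime sum $\sum_{t_0<p\le t_1}\sigma_p$ to retain all but a vanishing proportion of its total Mertens mass. Everything else — the factorisation, the linearisation of the logarithm, and the two error estimates — is entirely routine.
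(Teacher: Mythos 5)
Your argument is the same as the paper's: independence gives the Euler product $\prod_{t_0<p\le t_1}(1+\sigma_p(\mathrm e^t-1))$, the Taylor expansion of $\log(1+x)$ together with $\sigma_p\le\deg(F)/p$ linearises the logarithm (you get the sharper error $O_t(1/t_0)$ where the paper settles for $O(1)$, which is fine), and Lemma \ref{lem:mertens} reduces everything to the quantity $\log\log t_1-\log\log t_0$. Your evaluation of that quantity, $\log\log B-(M+1)\log\log\log B-\log M$, is also correct.

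The gap is precisely the step you flag at the end and do not carry out, and it is a real one: the stated hypotheses do \emph{not} force $M\log\log\log B/\log\log B\to 0$. The hypothesis $M(B)\le\log\log B$ permits $M=\log\log B$, for which $M\log\log\log B/\log\log B=\log\log\log B\to\infty$; worse, in that case $\log\log t_1=\log\log B\,(1-\log\log\log B)\to-\infty$, so $t_1\to 1$, the range $(t_0,t_1]$ is eventually empty, $\mathbb E[\mathrm e^{tS_B}]=1$, and the limit is $0$ rather than $\Delta(f)(\mathrm e^t-1)$. The conclusion therefore genuinely requires the stronger hypothesis $M\log\log\log B=o(\log\log B)$ (for instance $M=O(\log\log\log B)$). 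For what it is worth, the paper's own proof has the identical defect: it asserts that the difference equals $\log\log B+O((\log\log\log B)^2)$, which is only valid when $M=O(\log\log\log B)$, yet the proof of Theorem \ref{thm:main1} subsequently chooses $M(B)=\max\{2,\log\log B\}$. So you have correctly isolated the one non-routine point, but as written your proof (like the paper's) does not close it, and under the hypotheses as stated it cannot be closed.
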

\begin{proof}
For each prime $p$ we have 
$\mathbb{E} \left[\mathrm  e^{t Y_p} \right] 
= \mathrm e^t \sigma_p+1-\sigma_p$, hence, 
$$ \mathbb{E} \left[\mathrm  e^{t S_B} \right] 
= \prod_{t_0 < p \leq t_1} 
\mathbb{E} \left[ \mathrm  e^{t Y_p} \right] =
\prod_{t_0 < p \leq t_1} 
\left( 1+\sigma_p(\mathrm e^t  -1)\right)$$
owing to the independence of the 
random variables $\{Y_p\}_{p > \max\{A, \deg(F)\}}$.
Taking logarithms,
using the Taylor expansion of $\log(1+\epsilon)$, 
and applying the bound 
$\sigma_p\ll 1/p$ from Lemma \ref{lem:zelenkaoboes}
leads to $$ \log \mathbb{E} \left[ e^{t S_B} \right] 
= \sum_{t_0 < p \leq t_1} \log 
\left( 1+\sigma_p(\mathrm e^t  -1)\right)=(\mathrm e^t  -1)
 \sum_{t_0 < p \leq t_1}  
 \sigma_p+O\left(\sum_{p\geq 2} p^{-2}\right),$$
where the implied constant depends only on $t$.
By Lemma \ref{lem:mertens}, we obtain 
$$ \log \mathbb{E} \left[\mathrm  e^{t S_B} \right] 
=\Delta(f) \log \frac{\log t_1}{\log t_0}
+O(1).$$ Recalling the definitions of $t_0$ and 
$t_1$
we obtain 
$$\log \frac{
{ (\log \log B)^{-M(B)} 
(\log B)} }{M(B) \log \log B}=
\log \log B+O((\log \log \log B)^2)
.$$ This proves $ 
\log \mathbb{E} \left[ \mathrm e^{t S_B} \right] 
=\Delta(f) \log \log B+O((\log \log \log B)^2)$, which suffices. 
\end{proof}
\subsection*{Proof of Theorem \ref{thm:main1}}
\begin{proof}
We let $M(B)=\max\{2,\log \log B\}$ 
so that the assumptions of
Lemmas \ref{lem:9a}-\ref{lem:9b} are met.
    For each $B \geq 3$ we let $a_B:=\Delta(f)\log \log B$
    and  we define  the random variable
  $X_B$   as$$X_B(x) := \frac{\omega_f(x)^\flat}
{\Delta(f)\log \log B}, \ \ \ x\in \Omega_B.$$
    Then by combining Lemmas \ref{lem:9and0} 
    and \ref{lem:9b}, we find that 
    $$
    \Lambda(t) = \lim_{B \to \infty} \frac{1}{a_B} 
    \log \mathbb{E}[\mathrm e^{t a_B X_B}] = 
    \lim_{B \to \infty} \frac{1}{\Delta(f) \log \log B} 
    \log \mathbb{E}_B[\mathrm e^{t \omega_f^\flat}] 
    = \mathrm e^t - 1,$$ which is differentiable at every $t \in \mathbb{R}$.
    The rate function $I(x)$ is defined as 
    $$
    I(x) := \sup_{t \in \mathbb{R}}\{tx - \Lambda(t)\} = \sup_{t \in \mathbb{R}}\{tx - \mathrm e^t + 1\}.$$  It is easy to check that $I(1) = 0$.
For $x >0$, we have $I(x) = x (\log x) + 1 - x$.
For $x < 0$, $I(x)$ is $+\infty$.  By Lemma \ref{lem:gartellis}, for any measurable set $A$ we have
    \begin{align*}
    - \inf_{x \in A^\circ} I(x) &\leq \liminf_{B \to \infty} \frac{1}{\Delta(f) \log \log B} \log \mathbb{P}_B \left[ X_B \in A \right] \\
    &\leq \limsup_{B \to \infty} \frac{1}{\Delta(f) \log \log B} \log \mathbb{P}_B \left[ X_B \in A \right] \leq -\inf_{x \in \overline{A}} I(x).
    \end{align*}
    Choose $A := [0, \epsilon)$ for some fixed $0 < \epsilon < 1$. Then  $$ - \inf_{x \in A^\circ} I(x) 
= -(\epsilon \log \epsilon + 1 - \epsilon) = 
-\inf_{x \in \overline{A}} I(x).$$ Hence, we have
\begin{equation}\label{eq:the tea room}
\lim_{B \to \infty} \frac{1}{\Delta(f) \log \log B} 
\log \mathbb{P}_B \left[ \frac{\omega_f^\flat}{\Delta(f) 
\log \log B} < \epsilon \right] = -(\epsilon \log \epsilon 
+ 1 - \epsilon).\end{equation}

For fixed $c>0$ define the sets 
\begin{align*}
\c A_c&:=\left\{x\in \Omega_B: 
\frac{\omega_f}{\Delta(f) \log \log B}<c \right\}, \\
\c A^\flat_c&:=\left\{x\in \Omega_B: 
\frac{\omega_f^\flat}{\Delta(f) \log \log B}<c \right\}, \\
\c E_c&:=\left\{x\in \Omega_B: 
\frac{|\omega_f-\omega_f^\flat|}{ \Delta(f) \log \log B}
\geq c \right\}.\end{align*}
Since $\omega_f^\flat\leq \omega_f$ we have $\c A_\epsilon
\subset \c A^\flat_\epsilon$, hence, by \eqref{eq:the tea room} 
we obtain 
\begin{equation}\label{eq:uperbnd}\limsup_{B\to\infty}
\frac{\log
\mathbb{P}_B \left[ \frac{\omega_f}{\Delta(f) 
\log \log B} < \epsilon \right] }{\Delta(f)\log \log B}
\leq  - I(\epsilon) .\end{equation}
Fix any $\delta\in (0,\epsilon)$.
If $\omega_f^\flat/(\Delta(f) \log \log B)< \epsilon-\delta$
then we either have 
$\omega_f/(\Delta(f) \log \log B)< \epsilon$
or 
$|\omega_f-\omega_f^\flat|/(\Delta(f) \log \log B)\geq \delta$,
hence, $\c A^\flat_{\epsilon-\delta} 
\subset \c A_{\epsilon}\cup 
\c E_{\delta} $. Thus, by \eqref{eq:the tea room} 
and Lemma \ref{lem:8} we get  $$
(\log B)^{-\Delta(f)I(\epsilon-\delta) +o(1)}+
O\left(\frac{1}{(\log B)^N}\right) 
\leq\mathbb{P}_B \left[ \frac{\omega_f}{\Delta(f) 
\log \log B} < \epsilon \right]
,$$ for any fixed $N>1$. This is the same as writing 
$$
(\log B)^{-\Delta(f)I(\epsilon-\delta) +o(1)}\left(1+
O\left(\frac{1}{(\log B)^K}\right) \right)
\leq\mathbb{P}_B \left[ \frac{\omega_f}{\Delta(f) 
\log \log B} < \epsilon \right]
,$$ for any fixed $K>1$. Taking logarithms leads to 
$$-I(\epsilon-\delta) \leq \liminf_{B\to\infty} 
\frac{ \log \mathbb{P}_B \left[ \frac{\omega_f}{\Delta(f) 
\log \log B} < \epsilon \right]}{\Delta(f)\log \log B}
.$$ We let $\delta\to 0_+$ and combine 
with \eqref{eq:uperbnd} to deduce that for any 
$\epsilon \in (0,1)$ one has 
\begin{equation}\label{amplification} \#\{x\in \P^n(\Q):H(x)\leq B, 
\omega_f(x)<\epsilon \Delta(f) \log \log B\}= 
(\log B)^{-\Delta(f) \epsilon( (\log \epsilon)-1) 
 +o(1)} 
\cdot \frac{B^{n+1}} {(\log B)^{\Delta(f)}}.\end{equation}
Finally, we want to prove the analogous asymptotic with $\log \log B$ replaced by $\log \log H(x)$ in the left-hand side. We plainly have 
\begin{align*} &\#\{x\in \P^n(\Q):H(x)\leq B, 
\omega_f(x)<\epsilon \Delta(f) \log \log H(x)\}
\\ \leq 
  &\#\{x\in \P^n(\Q):H(x)\leq B, 
\omega_f(x)<\epsilon \Delta(f) \log \log B\}= 
(\log B)^{-\Delta(f) \epsilon( (\log \epsilon)-1) 
 +o(1)} 
\cdot \frac{B^{n+1}} {(\log B)^{\Delta(f)}},
\end{align*}
hence, 
\begin{equation}\label{refinement}\limsup_{B\to\infty} 
\frac{ \log \mathbb{P}_B \left[ \frac{\omega_f}{\Delta(f) 
\log \log H(x)} < \epsilon \right]}{\Delta(f)\log \log B}
\leq -I(\epsilon).\end{equation}
For the lower bound we note that $$
\#\{x\in \P^n(\Q):H(x)\leq B^{1/2}\}=O(B^{\frac{n+1}{2}})
.$$When $H(x)>B^{1/2}$ we have 
$\log \log B \leq (\log \log H(x)) +(\log 2)$, hence, 
for any $\delta\in (0,\epsilon)$ we obtain \begin{align*}
&\#\{x\in \P^n(\Q):H(x)\leq B, 
\omega_f(x)<(\epsilon-\delta) \Delta(f) \log \log B\}
\\
=O(B^{\frac{n+1}{2}})+
&\#\{x\in \P^n(\Q):B^{1/2}<H(x)\leq B, 
\omega_f(x)<(\epsilon-\delta) \Delta(f) \log \log B\}
\\
\leq O(B^{\frac{n+1}{2}})+
&\#\{x\in \P^n(\Q):B^{1/2}<H(x)\leq B, 
\omega_f(x)<\Delta(f) (\epsilon \log \log B
- \log 2)\}
\\
\leq O(B^{\frac{n+1}{2}})+
&\#\{x\in \P^n(\Q):B^{1/2}<H(x)\leq B, 
\omega_f(x)<\Delta(f) \epsilon \log \log H(x)\}
\\
\leq O(B^{\frac{n+1}{2}})+
&\#\{x\in \P^n(\Q):H(x)\leq B, 
\omega_f(x)<\Delta(f) \epsilon \log \log H(x)\}
.\end{align*} Hence, 
$$\P_B\left[\frac{\omega_f}{\Delta(f) \log \log B}<\epsilon-\delta \right]
\leq O(B^{-\frac{n+1}{2}})
+\P_B\left[\frac{\omega_f}{\Delta(f) \log \log H(X)}<\epsilon \right].$$ Dividing by $\Delta(f)\log \log B$ and applying
\eqref{amplification} yields 
$$-I(\epsilon-\delta) \leq 
\liminf_{B\to\infty}
\frac{\log
\mathbb{P}_B \left[ \frac{\omega_f}{\Delta(f) 
\log \log H(x)} < \epsilon \right] }{\Delta(f)\log \log B}
.$$ Letting 
$\delta\to0_+$ and using the continuity of $I$ at $\epsilon$ proves $$-I(\epsilon) \leq 
\liminf_{B\to\infty}
\frac{\log
\mathbb{P}_B \left[ \frac{\omega_f}{\Delta(f) 
\log \log H(x)} < \epsilon \right] }{\Delta(f)\log \log B}
.$$ Together with \eqref{refinement} this shows that $$
\lim_{B\to\infty}
\frac{\log
\mathbb{P}_B \left[ \frac{\omega_f}{\Delta(f) 
\log \log H(x)} < \epsilon \right] }{\Delta(f)\log \log B}
= -I(\epsilon).$$ 
Finally, replacing $\epsilon \Delta(f)$ by
$\epsilon$ brings this  into the form 
that is claimed in Theorem \ref{thm:main1}.\end{proof}

\end{document}